%% file: template.tex
\documentclass[11pt]{amsart}
\usepackage[dvipsnames,usenames]{color}
\usepackage{hyperref}
\usepackage{graphicx, import}
\usepackage{epsfig}
\usepackage[latin1]{inputenc}
\usepackage{amsmath}
\usepackage{amsfonts}
\usepackage{amssymb}
\usepackage{amsthm}
\usepackage{amscd}
\usepackage{verbatim}
\usepackage{subfigure}
\usepackage{caption}
\usepackage{pinlabel}
\usepackage{stmaryrd}
\usepackage{enumerate, enumitem}
\usepackage{todonotes}
\usepackage{bm}
\usepackage{thmtools}
\usepackage{thm-restate}
\usepackage{lipsum}
\usepackage{setspace}
\usepackage{mathtools}
\usepackage[all]{xypic}
\usepackage[abs]{overpic}
\usetikzlibrary{arrows.meta, decorations.pathmorphing}
\allowdisplaybreaks

\usepackage{mathdots}
\usepackage{tikz}
\usetikzlibrary{arrows}
\usetikzlibrary{decorations.pathreplacing}
\usepackage{verbatim}
\usetikzlibrary{cd}
\tikzset{taar/.style={double, double equal sign distance, -implies}}
\tikzset{amar/.style={->, dotted}}
\tikzset{dmar/.style={->, dashed}}
\tikzset{aar/.style={->, very thick}}

\usepackage[section]{placeins}

\newtheorem{theorem}{Theorem}[section]

\newtheorem{proposition}[theorem]{Proposition}

\theoremstyle{definition}
\newtheorem{definition}[theorem]{Definition}

\theoremstyle{remark}
\newtheorem{remark}[theorem]{Remark}

\def\F{\mathbb{F}}

\def\R{\mathbb{R}}
\def\Z{\mathbb{Z}}

\newcommand{\Int}{\mathop{\mathrm{int}}\nolimits} 
\newcommand{\cl}{\mathop{\mathrm{cl}}\nolimits}
\newcommand{\Inv}{\mathop{\mathrm{Inv}}\nolimits}

\newcommand{\setof}[1]{\left\{ {#1}\right\}}

\include{correctm}

\showcomments

\author[A. Parikh]{Konstantin Mischaikow and Aakash Parikh}
\email{mischaik@math.rutgers.edu, ap1792@math.rutgers.edu}
\address{Rutgers University, New Brunswick, NJ, USA}
\thanks{K.M. and A.P. were partially supported by the Air Force Office of Scientific Research under awards numbered FA9550-23-1-0011 and FA9550-23-1-0400.}

\numberwithin{equation}{section}

\title{Realizing Saddle-Node Bifurcations from Finite Data}

\begin{document}
\maketitle
\begin{abstract} 
Given a finite set of data generated by an unknown ordinary differential equation it is impossible to exactly determine the associated vector field, and hence, bifurcation theory tells us that it is impossible, in general, to correctly characterize the underlying dynamics.
In this paper, we bypass the effort of obtaining an analytic approximation of the vector field, and we adopt an approach based on Occam's razor: identify the simplest robust characterization of the dynamics that is compatible with the given data. 
Our fundamental assumption is that the data allows for the construction of an isolating block over a parameter space whose homological Conley index is consistent with a saddle-node bifurcation. 
Our main result establishes that, for phase spaces of dimension greater than or equal to $6$, the original vector field can be smoothly deformed into a canonical model exhibiting exactly one structurally stable saddle-node bifurcation. 
Crucially, this deformation leaves the vector field unaltered outside the isolating block, ensuring strict compatibility with the observed data. 
\end{abstract}
%
%

\section{Introduction}
\label{sec:intro}


It is clear that we are entering an era of data driven dynamics. 
What is far from clear is how to proceed from a finite set of data to a generally accepted understanding of the underlying dynamics.
To put this comment into perspective, consider the classical form of scientific modeling via a parameterized system of ordinary differential equations (ODEs)
\begin{equation}
\label{eq:ode}
\frac{dx}{dt} = f(x,\lambda),\quad x\in \R^n,\ \lambda\in\Lambda.   
\end{equation}

There are at least four approaches to characterizing the associated dynamics: (i) identify explicit analytic solutions, (ii) use numerical methods to approximate solutions, (iii) identify invariant measures, and (iv) identify invariant sets.
Each of the approaches has strengths and weaknesses.
However,  successful characterization by any one of these methods provides an accepted understanding of the dynamics.

In this paper we focus on the fourth approach.
In particular, we assume that we are given a finite set of data associated with vector fields  at a finite set of parameter values  and that the process that generated this data can be modeled by an ODE of the form of \eqref{eq:ode}. 
A naive goal, to deduce complete information about the structure of the associated invariant sets, is impossible to achieve.
In general, if $n\geq 2$, then identification of dynamics up to conjugacy requires an uncountable set of invariants and hence cannot be achieved from a finite data set \cite{foreman:rudolph:weiss}.

A less naive goal is to attempt to identify an analytic expression $g$ that approximates the vector field $f$.
Even a brief survey of this approach exceeds the scope of this paper (see for example \cite{brunton:proctor:kutz} and its citations).
There are good reasons for the popularity of this approach; the output is a system of differential equations that can be analyzed and applied using traditional numerical techniques. 

However, there are at least three reasons why alternative/complementary methods are worth pursuing.
First, ensuring that $g$ is a good approximation of $f$ typically requires considerable data.
Second, bifurcation theory tells us that even if $g$ is a good approximation of $f$, the dynamics (on the level of invariant sets) of $g$ need not agree with the dynamics of $f$.
In particular, dynamics can change over Cantor sets of parameters \cite{palis:takens:93}.
Finally, knowledge of $g$ does not imply knowledge of the dynamics induced by $g$; the focus of much of applied dynamics involves identify the dynamics of differential equations.

With this in mind, and in the spirit of Occam's razor, we pursue a weaker objective: 
\begin{quote}
    \emph{Identify the simplest robust characterization of the dynamics that is compatible with the data.}
\end{quote}
  
We motivate our approach via a particularly simple example.
Assume the unknown vector field $f(x,\lambda)$ is defined on $\R$, the parameter space $\Lambda = [0,1]$, and $f$ gives rise to a parametrized family of flows $\Phi\colon \R\times \R \times \Lambda \to \R \times \Lambda$ where $\Phi(t,x,\lambda) = (\varphi_\lambda(t,x),\lambda)\in \R\times \setof{\lambda}$.
Observe that within this formula we are using $\varphi_\lambda \colon \R \times \R \to \R$ to denote the flow associated with the  ODE $\dot{x} = f(x,\lambda)$.
The information that we are given about $f$ comes from a finite sampling of the vector field at different parameters, e.g., $\setof{f(x_n,\lambda_n)}$ as shown in Figure~\ref{fig:data}(a).

\begin{figure}[hbt]
\begin{picture}(450,250)(0,-10)

\put(0,0){(a)}
\put(0,0){
\begin{tikzpicture}[scale=1]

\draw (0,0) -- (6,0);
\draw (0,0) -- (0,6);

\draw[dotted,thin] (0.5,0) -- (0.5,6);
\draw[dotted,thin] (5.5,0) -- (5.5,6);

\draw(6.5,0) node{$\Lambda$};
\draw(0.5,-0.2) -- (0.5,0);
\draw(5.5,-0.2) -- (5.5,0);
\draw(0.5,-0.5) node{$0$};
\draw(5.5,-0.5) node{$1$};

\draw(0,6.5) node{$\R$};
\draw(-0.2,0.75) -- (0,0.75);
\draw(-0.2,3) -- (0,3);
\draw(-0.2,5.5) -- (0,5.5);
\draw(-0.5,0.75) node{$a$};
\draw(-0.5,3) node{$b$};
\draw(-0.5,5.5) node{$c$};

\foreach \i in {0,...,5}{
\draw[-Stealth,thick] (0.5+\i,0.75) -- ++ (0,-0.5 - .05*\i);
\draw[-Stealth,thick] (0.5+\i,5.5) -- ++ (0,-0.5 - .05*\i);
}

\draw[-Stealth,thick] (0.5,3) -- ++ (0,0.5);

\foreach \i in {0,...,1}{
\draw[-Stealth,thick] (5.5,2.5+1.5*\i) -- ++ (0,-0.75);
}
\end{tikzpicture}
}

\put(225,0){(b)}
\put(225,0){
\begin{tikzpicture}[scale=1]

\draw (0,0) -- (6,0);
\draw (0,0) -- (0,6);

\draw[dotted,thin] (0.5,0) -- (0.5,6);
\draw[dotted,thin] (5.5,0) -- (5.5,6);
\draw[dotted,thin] (4,0) -- (4,6);

\draw(6.5,0) node{$\Lambda$};
\draw(0.5,-0.2) -- (0.5,0);
\draw(5.5,-0.2) -- (5.5,0);
\draw(0.5,-0.5) node{$0$};
\draw(5.5,-0.5) node{$1$};
\draw(4.0,-0.5) node{$\lambda_0$};

\draw(0,6.5) node{$\R$};
\draw(-0.2,0.75) -- (0,0.75);
\draw(-0.2,3) -- (0,3);
\draw(-0.2,5.5) -- (0,5.5);
\draw(-0.5,0.75) node{$a$};
\draw(-0.5,3) node{$b$};
\draw(-0.5,5.5) node{$c$};

\foreach \i in {0,...,5}{
\draw[-Stealth,thick] (0.5+\i,0.75) -- ++ (0,-0.5 - .05*\i);
\draw[-Stealth,thick] (0.5+\i,5.5) -- ++ (0,-0.5 - .05*\i);
}

\draw[-Stealth,thick] (0.5,3) -- ++ (0,0.5);

\draw[-Stealth,blue,thick] (0.5,2.25) -- ++ (0,0.4);
\draw[-Stealth,blue,thick] (0.5,1.75) -- ++ (0,-0.4);
\draw[-Stealth,blue,thick] (0.5,4.55) -- ++ (0,-0.4);

\draw[-Stealth,blue,thick] (1.5,2.25) -- ++ (0,0.4);
\draw[-Stealth,blue,thick] (1.5,1.75) -- ++ (0,-0.4);
\draw[-Stealth,blue,thick] (1.5,3.25) -- ++ (0,0.4);
\draw[-Stealth,blue,thick] (1.5,4.45) -- ++ (0,-0.4);

\draw[-Stealth,blue,thick] (2.5,2.45) -- ++ (0,0.35);
\draw[-Stealth,blue,thick] (2.5,1.85) -- ++ (0,-0.35);
\draw[-Stealth,blue,thick] (2.5,3.25) -- ++ (0,0.35);
\draw[-Stealth,blue,thick] (2.5,4.25) -- ++ (0,-0.35);

\draw[-Stealth,blue,thick] (3.5,2) -- ++ (0,-0.35);
\draw[-Stealth,blue,thick] (3.5,2.9) -- ++ (0,0.3);
\draw[-Stealth,blue,thick] (3.5,4.1) -- ++ (0,-0.35);

\foreach \i in {0,...,1}{
\draw[-Stealth,blue,thick] (4.5,2.5+1.5*\i) -- ++ (0,-0.6);
\draw[-Stealth,thick] (5.5,2.5+1.5*\i) -- ++ (0,-0.75);
}

\begin{scope}
\clip (0.5,4) rectangle (5,3);
\draw[blue] (0,3) ellipse(4 and 1.0);    
\end{scope}

\begin{scope}
\clip (0.5,2) rectangle (5,3);
\draw[blue,dashed] (0,3) ellipse(4 and 1.0);    
\end{scope}

\end{tikzpicture}
}

\end{picture}  
\caption{(a) Arrows indicate sampled vector field $f(x,\lambda)$, $x\in \R$ and $\lambda\in [0,1]\subset \Lambda$.
(b) Black and blue arrows  indicate vector field of $h(x,\lambda)$, $x\in \R$ and $\lambda\in [0,1]\subset \Lambda$.
Black arrows agree with those of (a).
Solid blue curve indicates stable equilibria. 
Dashed blue curve indicates unstable equilibria.}
\label{fig:data}
\end{figure}

Based on  the above mentioned data we restrict our attention to the dynamics on the interval $[a,c]\subset \R$.
Since the parameterized vector field $f$ defined on $[a,c]\times [0,1]$ is unknown,  we are required to make assumptions before proceeding.
We assume that 
\begin{description}
    \item[S1] $f(a,\lambda) <0$  for all $\lambda \in [0,1]$,
    \item[S2] $f(c,\lambda) <0$ for all $\lambda \in [0,1]$,
    \item[S3] $f(x,1) <0$ for all $x\in [a,c]$, and
    \item[S4] $f(b,0) >0$, and
\end{description}

\begin{remark}
\label{rem:justification}
While assumptions {\bf S1} - {\bf S4} allow us to deduce the type of result being offered by this paper, some form of justification for these assumptions is necessary.
Our philosophy is that models for physical problems satisfy reasonable problem dependent constraints.
For example, if we assume that the set of models that we want to consider satisfy a global Lipschitz bound $L$, then for appropriate $L$   {\bf S1} - {\bf S4} follows from the data.
Alternatively, we can assume that our models come from a centered, continuous Gaussian field in which case the data provides a conditional probability that the vector field satisfies  {\bf S1} - {\bf S4}.
In either case a well defined assumption on the class of models to be considered allows one to deduce explicit assumptions that can be used to draw definitive understanding about the potential dynamics.
\end{remark}

Since this example is meant to foreshadow the results of this paper we recall some notation.
Let $X$ be a compact metric space and let $\varphi\colon \R\times X\to X$ be a flow.
A  set $S\subset X$ is an \emph{invariant set} under $\varphi$ if $\varphi(t,S) = S$ for all $t\in \R$.
Given $B\subset X$, we denote the \emph{maximal invariant set in} $B$ \emph{under} $\varphi$  by $\Inv(B,\varphi)\coloneq\{x\in B|\phi(t,x)\in B\textrm{ for all }t\in \mathbb{R}\}$. A subset $L \subset X$ is called a \emph{local section} if there exists $\delta > 0$ such that $L \cdot [-\delta,\delta] := \{\varphi_t(x) \mid x \in L,\ t \in [-\delta,\delta]\}$ is homeomorphic to $L \times [-\delta,\delta]$. A compact set $B \subset X$ is an \emph{isolating block} if $B$ is a compact set with $\Inv(B,\varphi)\subset \Int(B)$ (where $\Int$ denotes interior), and the boundary $\partial B$ decomposes as a union of three closed sets $\partial B = \partial B^{+} \cup \partial B^{-} \cup \partial B^{t}$, where
\begin{enumerate}
    \item $\partial B^{+}$, $\partial B^{-}$ and $\partial B^{t}$ are local sections,
    \item there exists $\delta > 0$ such that $\partial B^{-} \cdot (0,\delta) \cap B = \partial B^{+} \cdot (-\delta,0) \cap B = \emptyset$,
    \item and if $x \in \partial B^{t}$, then there exist $t_1 < 0 < t_2$ such that $\varphi_{[t_1,t_2]}(x) \subset \partial B^{t},$ $ \varphi_{t_1}(x) \in \partial B^{+},$ and $ \varphi_{t_2}(x) \in \partial B^{-}.$

\end{enumerate}
We call $\partial B^+$ the entrance set and $\partial B^-$ the exit set.

Recall that the $\omega$ and $\alpha$ limits of a set $X_0\subset X$ under $\varphi$ are defined by
\[
\omega(X_0,\varphi):=\bigcap_{s\in [0,\infty)} \cl(\varphi([s,\infty),X_0))\quad \textrm{and}\quad \alpha(X_0,\varphi):=\bigcap_{s\in (-\infty,0]} \cl(\varphi((-\infty,s],X_0))
\] 
where $\cl$ denotes closure.

If $S$ is an isolated invariant set an \emph{attractor} is an invariant set $A\subset S$ which admits a neighborhood $A\subset U\subset S$ such that $\omega(U)=A$. Given an attractor $A\subset S$ we define the \emph{dual repeller} of $A$ by $A^*\coloneq \{x\in S|\omega(x)\cap A=\emptyset\}.$

Returning to the example, observe that {\bf S1} and {\bf S2} imply that $[a,c]$ is an isolating neighborhood for $\varphi_\lambda$ for all $\lambda\in [0,1]$.
Furthermore, {\bf S3} implies that $\Inv([a,c],\varphi_1) = \emptyset$.
{\bf S4} implies that $\Inv([a,b],\varphi_0) \neq \emptyset$ and $\Inv([b,c],\varphi_0)\neq \emptyset$.
Since we are working with flows defined on $\R$, $\Inv([a,b],\varphi_0)$ and $\Inv([b,c],\varphi_0)$ must consist of fixed points and connecting orbits between the fixed points.
Furthermore, $\omega(b,\varphi_0) \subset \Inv([b,c],\varphi_0)$ and $\alpha(b,\varphi_0) \subset \Inv([a,b],\varphi_0)$.
Without further assumptions on $f$ this is essentially the extent to which a rigorous description of the dynamics associated with $\Phi$ restricted to $[a,c]\times [0,1]\subset \R\times \Lambda$ can be provided.

While correct, the description provided above of the dynamics of $\Phi$ is complicated and unenlightening.
This is due to the fact that we are attempting to describe dynamics for all possible parameterized families of flows that match the data and the assumptions.
For example, the fact that  there is no a priori limit on the number of fixed points of $\varphi_\lambda$ for any given $\lambda \in [0,1)$ implies that we can only speak about general invariant sets.

As indicated above our goal is to identify the simplest robust dynamics that is consistent with the data and assumptions {\bf S1} - {\bf S4}. 
Figure~\ref{fig:data}(b) provides an answer to this question.

To be more precise, consider a two parameter system of ODEs
\begin{equation}
\label{eq:homotopyODE}
\frac{dx}{dt} = F(x,\lambda,\sigma),\quad x\in \R,\ \lambda\in [0,1],\ \sigma \in [0,1]   
\end{equation}

such that
\begin{description}
    \item[h1] $F(a,\lambda,\sigma) = f(a,\lambda)$ for all $\lambda\in [0,1]$ and all $\sigma\in[0,1]$,
    \item[h2] $F(c,\lambda,\sigma) = f(c,\lambda)$ for all $\lambda\in [0,1]$ and all $\sigma\in[0,1]$,
    \item[h3] $F(x,1,\mu) < 0$ for all $x\in [a,c]$  and all $\sigma\in[0,1]$, and
    \item[h4] $F(b,0,\sigma) = f(b,0)$ for all $\sigma\in[0,1]$.
\end{description}
For simplicity of notation set
\begin{equation}
\label{eq:Occamode}
\frac{dx}{dt} = h(x,\lambda) = F(x,\lambda,1),\quad x\in \R,\ \lambda\in\Lambda,
\end{equation}
and let  $\Psi\colon \R \times \R \times [0,1] \to \R \times [0,1]$ where $\Psi(t,x,\lambda) = (\psi_\lambda(t,x),\lambda) \in \R\times \setof{\lambda}$ denotes the parameterized flow for \eqref{eq:Occamode}. 

We claim that there exists $F$ such that the dynamics of $\Psi$ satisfies
\begin{description}
    \item[C1] for $\lambda \in [0,\lambda_0)$, $\Inv([a,c],\psi_\lambda)$ consists of a unique hyperbolic stable equilibrium $A_\lambda$, a unique  hyperbolic unstable equilibrium $R_\lambda$, and a unique heteroclinic orbit from $R_\lambda$ to $A_\lambda$;
    \item[C2] a saddle-node bifurcation occurs at $\lambda = \lambda_0$; and
    \item[C3] for $\lambda \in (\lambda_0,1]$, $\Inv([a,c],\psi_\lambda)=\emptyset$.
\end{description}
In addition, $A_0 \in (b,c)$ and $A^*_0 \in (a,b)$.

Observe that {\bf h1} - {\bf h4} ensures not only that $f$ and $h$ agree on the data, but that one can get from the true model $f$ to the idealized model $h$ by a continuous change of parameters that does not contradict the experimental data.
{\bf C1} - {\bf C3} gives precision to the  dynamics shown in Figure~\ref{fig:data}(b).

Bifurcation theory guarantees that if we perturb $h$, then the dynamics described by {\bf C1} - {\bf C3} will be preserved (though for the perturbed system the parameter value $\lambda_0$ where the saddle node occurs may change).
We hope the reader agrees that the description of the dynamics via {\bf C1} - {\bf C3} is more illuminative than that provided for $f$.

The philosophy of our approach is that given the data and reasonable assumptions on the class of models that we wish to entertain for the sake of enlightenment we choose as simple a model as possible.
In principle this simple model could be used to suggest what additional data should be collected to support or contradict the conclusions of the simple model.

For $x\in \R$, proving the existence of $F$ is rather straightforward.
The goal of this paper is to prove an analogous result (see Theorem~\ref{saddlenodethm}) for $x\in \R^n$ with $n\ge 6$.
The challenge in higher dimensional cases is that though the behavior of the unknown vector field $f$ on the boundary of $B$ can be quite complicated we need to show that it can be deformed to a rather simple vector field $h$.

Recall that a pseudo-isotopy of a smooth compact $C^\infty$ manifold $M$ is a diffeomorphism  of $M\times [0,1]\to M\times [0,1]$ which restricts to the identity on $(M\times\{0\})\cup (\partial M\times[0,1])$.
Let $\mathcal{P}$ denote the group of pseudo-isotopies of $M$ where the group operation is composition, and equip $\mathcal{P}$ with the $C^\infty$ topology. The pseudo isotopy theorem states that if $M$ is simply connected and $\textrm{dim}(M)\ge 5$, then $\mathcal{P}$ is connected.

To access the pseudo isotopy theorem, via a homotopy we recast our problem into the setting of Morse theory (see Proposition~\ref{reineckparameterized}).
In particular, this allows us to focus on $\mathcal{G}$ the space of smooth functions $g:M\times [0,1]\to [0,1]$ with $g^{-1}(i)=M\times\{i\}$ for $i=0,1$ and no critical points on the boundary, and $\mathcal{E}\subset \mathcal{G}$, the functions which have no critical points (such as projection to the interval).
Then the pseudo-isotopy theorem may be restated as follows: A path in $\mathcal{G}$ with endpoints in $\mathcal{E}$ can be deformed relative to its endpoints to a path in $\mathcal{E}$.
This puts us into the setting of Cerf theory \cite{Cerf1}, which we exploit to prove Theorem~\ref{saddlenodethm}, the main result of this paper.

\subsection*{Organization} This paper is organized as follows. In Section \ref{sec:background} we recall background material on Conley index theory and some results from Cerf theory. In Section \ref{proof} we state and prove our result.

\subsection*{Acknowledgments} We thank Francois Laudenbach and Filippos Sytilidis for helpful conversations. 

\section{Background}
\label{sec:background}
Consider a parameterized system of ODEs
\begin{equation}
\label{eq:ode2}
\frac{dx}{dt} = f(x,\lambda),\quad x\in 
\R^n,\ \lambda\in \Lambda 
\end{equation}
and the associated parameterized family of flows
\begin{align*}
    \Phi\colon \R\times \R^n\times [0,1] & \to \R^n \times \Lambda \\
    (t,x,\lambda) & \mapsto \Phi(t,x,\lambda) = (\phi_\lambda(t,x),\lambda)
\end{align*}
\begin{remark}
We will exclusively use $\Lambda=[0,1]$ as our space of parameters in what follows, but we stick with the notation $\Lambda$ in order to avoid confusion with other instances of $[0,1]$.
\end{remark}

Given an isolating block $B\subset \R^n\times \Lambda$ for $\Phi$, set $B_\lambda := B\cap \R^n \times \setof{\lambda}$ and $S_\lambda\coloneq \Inv(B_\lambda,\phi_\lambda)$. We say that the isolated invariant set $\Sigma=\Inv(B,\Phi)$ is a continuation between $S_0$ and $S_1$. Say that $A\cup A^*\cup C(A,A^*)$ is a decomposition of $S$ into an attractor $A$ and dual repeller $A^*$ along with the set of connection orbits $C(A,A^*)$.

\begin{definition}\label{simplenbhd}

We say that $B_0$ is a \emph{simple isolating block} if $B_0=M\times[0,1]$ for some manifold $M$ (see the Cerf theory section for why we take $B_0$ to be a product with an interval), $B_0$ is simply connected, and there exist isolating blocks $B_{A^*}, B_A\subset B_0$ for $A^*$ and $A$ respectively such that $B_{A^*}\cup B_A=B_0$,  $B_{A^*}\cap B_A=\partial B_{A^*}^-\cap \partial B_A^+$, and $B_{A^*}$ and $B_A$ along with their entrance and exit sets are simply connected.
\end{definition}
With this notation in place, we can state the definitions needed to formulate our main theorem.
\begin{definition}
\label{defn:SNisolating}

We say that $B\subset \R^n\times \Lambda$ \emph{isolates a saddle-node bifurcation} for \eqref{eq:ode2} if $B$ is an isolating block for $\Phi$ such that:
\begin{enumerate}
\item[(i)]  $B$ is smoothly isotopic to $B_0\times \Lambda\subset \R^n\times\Lambda$.
\item[(ii)]  $B_0$ is a simple isolating neighborhood as in Definition \ref{simplenbhd}.
\item[(iii)] There exists $\lambda_0 \in (0,1)$ such that for all $\lambda \in [0,\lambda_0)$, $\Inv(B_\lambda,\varphi_\lambda) = A_\lambda \cup R_\lambda \cup C(R_\lambda,A_\lambda)$ where 
\begin{itemize}
    \item $A_\lambda$ is a hyperbolic equilibrium with $k-1$ dimensional unstable manifold,
    \item $R_\lambda$ is a hyperbolic equilibrium with $k$ dimensional unstable manifold
    \item $C(R_\lambda,A_\lambda)$ consists of a unique heteroclinic orbit from $R_\lambda$ to $A_\lambda$.
\end{itemize}
\item[(iv)] At $\lambda = \lambda_0$ there is a unique equilibrium that undergoes a saddle-node bifurcation with respect to $\lambda$ (see \cite[Section 8.2]{chicone}, and also Remark \ref{whitneyremark} below).
\item[(v)] For all $\lambda \in (\lambda_0,1]$, $\Inv(B_\lambda,\varphi_\lambda) = \emptyset$. 
\end{enumerate}

\end{definition}
A pair of compact sets $(N,L)$ with
$L \subset N \subset X$ is called an \emph{index pair} for $S$ if
$\operatorname{Inv}(\cl(N \setminus L)) = S$, if whenever $x \in L$ and
$\varphi_{[0,t]}(x) \subset N$ then $\varphi_{[0,t]}(x) \subset L$, and if for
every $x \in N$ such that $\varphi_t(x) \notin N$ for some $t>0$, there exists
$0 \leq s \leq t$ with $\varphi_s(x) \in L$. The \emph{Conley index} of $S$ is
the pointed homotopy type $h(S) := [\,N/L, [L]\,]$, where $(N,L)$ is any index pair
and $L$ is collapsed to a basepoint. The \emph{homology Conley index} is $CH_*(S):=H_*(h(S))$. The Conley index is well defined because there is a flow defined homotopy equivalence between any two index pairs for the same isolated invariant set. If $B$ is an isolating block for $S$ with
boundary decomposition $\partial B = \partial B^{+} \cup \partial B^{-} \cup \partial B^{t}$,
then $(N,L) = (B,\partial B^{-})$ is an index pair for $S$. See \cite{MR511133, ConleyEaston, Salamon1} for many more details about this theory.
\begin{definition}
\label{defn:SNisolating}
We say that an isolating neighborhood for $\Phi$, $B$, \emph{isolates a homological saddle-node bifurcation} for \eqref{eq:ode2} if
\begin{enumerate}
\item[(i)]  $B$ is smoothly isotopic to $B_0\times \Lambda\subset \R^n\times\Lambda$.
\item[(ii)] $B_0$ is a simple isolating neighborhood.
\item[(iii)] $CH_*(\Inv(B_0,\varphi_0)) = 0$.
\item[(iv)] $\Inv(B_0,\varphi_0)$ has a decomposition into an attractor and dual repellor $(A_0,A_0^*)$ such that
\[
CH_m(A_0) \cong \begin{cases}
    \Z & \text{if $m=k-1$}\\
    0 & \text{otherwise,}
\end{cases}
\quad\text{and}\quad
CH_m(A_0^*) \cong \begin{cases}
    \Z & \text{if $m=k$}\\
    0 & \text{otherwise.}
\end{cases}.
\]

\end{enumerate}
\end{definition}

The main result of this paper is as follows.

\begin{theorem}\label{saddlenodethm}
Assume that $B$ isolates a homological saddle-node bifurcation for \eqref{eq:ode2} with $n\ge 6$.
Then, there exists a smooth parameterized family of ODEs
\begin{equation}\label{2parameterODE}
    \frac{dx}{dt}  = F(x,\lambda,\sigma), \quad x\in \mathbb{R}^n,\ \lambda \in \Lambda,\ \sigma\in [0,1]
\end{equation}
\end{theorem}
such that 
\begin{enumerate}
    \item[(i)] $F(x,\lambda,\sigma) = f(x,\lambda)$ for all $(x,\lambda) \in (\R^n \times [0,1])\setminus \Int(B)$ and all $\sigma\in [0,1]$, 
    \item[(ii)] $F(x,\lambda,0) = f(x,\lambda)$ for all $(x,\lambda)\in \R^n \times [0,1]$, 
    \item[(iii)] $B$ isolates a saddle-node bifurcation for $\frac{dx}{dt} = F(x,\lambda,1)$.
\end{enumerate}

 \begin{remark}\label{whitneyremark}
    In the context of a gradient dynamical system, (iii) in Theorem \ref{saddlenodethm} can be reformualated as saying that $F(x,\lambda,1)$, in some system of coordinates $x=(y,z)\in \R^{n-1}\times \R$ in a neighborhood of $\lambda_0$, is given by $\nabla g$ where \begin{equation}\label{whitcoord}g(x,\lambda)=g(x_0,\lambda_0)+z^3\pm(\lambda- \lambda_0) z+Q(y)\end{equation} and $Q$ is a non-degenerate quadratic form on $\R^{n-1}$. These coordinates are the so-called \emph{Whitney coordinates}.
 \end{remark}

\subsection{Cerf theory}\label{Cerfsection}

In this section we will consider a number of function spaces defined as subsets of $C^\infty(X,Y)$ for compact manifolds with boundary $X$ and $Y$; for us, all such functions spaces carry the weak Whitney $C^\infty$ topology. For what follows, let $X=M\times [0,1]$ and let $X_i=M\times{i}$ where $M$ is a simply connected manifold with boundary. 
\begin{definition}
Let $\mathcal{G}$ be the space of $C^\infty$ functions $g:X\to [0,1]$ such that $g^{-1}(i)=X_i$ for $i=0,1$ and $g$ has no critical points on $\partial X$. 
\end{definition}

\begin{definition}  
A function $g\in\mathcal{G}$ has a \emph{cubic singularity} at the point $x\in X$ if the Hessian of $g$ at $x$ has nullity 1.
\end{definition}
Following Cerf, we now introduce the first two levels of the stratification of $\mathcal{G}$ \cite{Cerf1}. A Morse function is \emph{excellent} if no two critical points share a critical value. The set of excellent Morse functions is denoted $\mathcal{G}^0$ and is the \emph{zeroeth stratum} in the stratification of $\mathcal{G}$. A function $g\in \mathcal{G}$ is a \emph{generalized Morse function} if $g$ is Morse away from one  cubic singularity and no two critical points share a critical value. The set of generalized Morse functions is denoted $\mathcal{G}^1_\alpha$. A function $g\in\mathcal{G}$ is a \emph{crossing Morse function} if $g$ is Morse and exactly two critical points of $g$ share a critical value. The set of crossing Morse functions is denoted $\mathcal{G}^1_\beta$. Setting $\mathcal{G}^1:=\mathcal{G}^1_\alpha\cup\mathcal{G}^1_{\beta}$, $\mathcal{G}^1$ is the \emph{first stratum} in the stratification of $\mathcal{G}.$ 

Cerf theory establishes results about how a generic path of functions in $\mathcal{G}$ is positioned with respect to the stratification just introduced. These statements should be thought of as analogous to the classical result that generic smooth functions are Morse.

Let $g(x,\lambda)$ be a smooth path in $\mathcal{G}$ with $\lambda$ the path parameter. Then we say that $g(x,\lambda)$ has an \emph{isolated cubic singularity at} $(x_0,\lambda_0)$ if the Hessian of $g(x_0,\lambda_0)$ is corank $1$, $g(x,\lambda_0)|_{X\backslash\{x_0\}}$ is Morse, and there exists $\epsilon>0$ such that $g(x,\lambda)$ is Morse for $0<|\lambda-\lambda_0|<\epsilon$. The following proposition due to Whitney determines the local form of an isolated cubic singularity in a smooth path\cite{MR98418}. 
\begin{proposition}\label{whitney}
    If $g(x,\lambda)\in \textrm{Maps}([0,1],\mathcal{G})$ has an isolated cubic singularity at $(x_0, \lambda_0)$, then there is a coordinate transformation $x=(y,z)\in \R^{n-1}\times \R$ and a $\delta>0$ such that for all $\lambda\in(\lambda_0-\delta,\lambda_0+\delta)$, $g(x,\lambda)$ is given by Equation \ref{whitcoord}.
\end{proposition}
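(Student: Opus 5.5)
The statement just above is Proposition~\ref{whitney}, Whitney's local normal form for an isolated cubic singularity in a one-parameter family. My plan is to work near $(x_0,\lambda_0)$ and proceed in three steps: split off the nondegenerate directions with a parametrized Morse lemma, reduce to a one-variable problem, and then normalize the resulting one-variable family.

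First I split off the nondegenerate part. The Hessian of $g(\cdot,\lambda_0)$ at $x_0$ has corank $1$. So I choose linear coordinates $x=(y,z)$ centered at $x_0$ in which $\partial_y^2 g(x_0,\lambda_0)$ is nondegenerate and the kernel is the $z$-axis. By the implicit function theorem, $\partial_y g(y,z,\lambda)=0$ has a smooth solution $y=\eta(z,\lambda)$ near $(0,\lambda_0)$. The Morse lemma with parameters (the splitting lemma), applied in $y$ with $(z,\lambda)$ as parameters, gives a change of coordinates $y\mapsto \tilde y(y,z,\lambda)$ such that
\[
g=\tilde g(z,\lambda)+Q(\tilde y),
\qquad \tilde g(z,\lambda):=g(\eta(z,\lambda),z,\lambda).
\]
Here $Q$ is the fixed nondegenerate quadratic form determined by the signature at $\lambda_0$.

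This reduces the problem to one variable. Write $\tilde g(z,\lambda)-\tilde g(0,\lambda_0)$, and work with $h=\partial_z\tilde g$. By construction $h(0,\lambda_0)=0$ and $\partial_z h(0,\lambda_0)=0$. Isolatedness gives two more facts:
\begin{itemize}
\item The critical point at $\lambda_0$ is isolated and the family is Morse nearby. This forces $\partial_z^2 h(0,\lambda_0)\neq 0$; otherwise the singularity would be degenerate of higher order, which is not the cubic singularity in the intended generic sense. I would make explicit that ``cubic'' means the third derivative in the kernel direction is nonzero.
\item The critical points must appear or disappear transversally. This gives $\partial_\lambda h(0,\lambda_0)\neq0$, i.e.\ the family is versal. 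I expect this to need the genericity implicit in ``isolated cubic singularity in a path'', and I would state it as part of the hypothesis.
\end{itemize}

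With these conditions I normalize. A Malgrange preparation argument (or, more elementarily, the Taylor form $h=a(z,\lambda)z^2+b(\lambda)\cdot(\lambda-\lambda_0)+\dots$ together with a reparametrization) lets me find a diffeomorphism $z\mapsto \zeta(z,\lambda)$ such that $\tilde g=c(\lambda)+\zeta^3\pm(\lambda-\lambda_0)\zeta$. The hardest step is this fold normal form with parameter, and in particular keeping $\lambda$ itself as the parameter without reparametrizing it. I would apply Mather's unfolding theorem to the $A_2$ singularity $z^3$, whose miniversal unfolding is $z^3+uz$. This yields $\tilde g(z,\lambda)=\zeta^3+u(\lambda)\zeta+c(\lambda)$ with $u'(\lambda_0)\neq0$. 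Rescaling $\zeta$ by a $\lambda$-dependent factor $\kappa(\lambda)$ turns $u(\lambda)$ into $\pm(\lambda-\lambda_0)$, since $u(\lambda)/(\lambda-\lambda_0)$ is smooth and nonvanishing. Any leftover constant $c(\lambda)-g(x_0,\lambda_0)$ I absorb by noting that only the critical-point structure matters, or else I add it to the constant term.

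Finally I choose $\delta$ small enough that all of these coordinate changes are defined on the $\delta$-interval.
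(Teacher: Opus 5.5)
Your reduction (splitting lemma, then the $\mathcal{R}^+$-miniversal unfolding $z^3+uz$ of the $A_2$ germ) is the standard route. It correctly yields $g=c(\lambda)+\zeta^3+u(\lambda)\zeta+Q(\tilde y)$ with $u(\lambda_0)=0$ and $u'(\lambda_0)\neq0$. The paper gives no argument of its own here; it only cites Whitney.

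\textbf{The final step fails.} Substituting $\zeta=\kappa(\lambda)\xi$ gives $\kappa^3\xi^3+\kappa u\,\xi$, so you cannot change $u$ while keeping the cubic coefficient equal to $1$. No other $\lambda$-dependent change of $x$-coordinates can help either, because the critical values of $g(\cdot,\lambda)$ are invariant under such changes. Where they exist, they equal $c(\lambda)\pm\frac{2}{3\sqrt{3}}|u(\lambda)|^{3/2}$. The target form $g(x_0,\lambda_0)+z^3\pm(\lambda-\lambda_0)z+Q(y)$ instead forces them to equal $g(x_0,\lambda_0)\pm\frac{2}{3\sqrt{3}}|\lambda-\lambda_0|^{3/2}$. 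Thus $(\lambda-\lambda_0)+z^3+2(\lambda-\lambda_0)z+Q(y)$ satisfies every hypothesis you impose, yet is not of the stated form in any coordinates $x=(y,z)$.

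So the step you call hardest, keeping $\lambda$ itself as the parameter, is not hard but impossible. Your remark that the leftover constant $c(\lambda)-g(x_0,\lambda_0)$ can be absorbed because only the critical-point structure matters concedes the same point for the constant term. What your argument actually proves is the form with $c(\lambda)$ and $u(\lambda)$. The displayed formula is reached only after reparametrizing by $\mu=\lambda_0\pm u(\lambda)$ (a local diffeomorphism since $u'(\lambda_0)\neq0$, sign chosen to preserve orientation) and subtracting a function of $\lambda$. That is, it holds only up to the equivalence of paths (diffeomorphisms of source, target and parameter) that Cerf works with. The proposition as literally written is therefore false. The weaker statement is what Whitney and Cerf provide and all that the paper needs, and you should state your conclusion that way.

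\textbf{The hypotheses.} Neither nondegeneracy condition follows from the paper's definition of an isolated cubic singularity (corank-one Hessian, Morse for $0<|\lambda-\lambda_0|<\epsilon$), so isolatedness does not ``force'' them. The family $z^4+(\lambda-\lambda_0)z+Q(y)$ meets that definition, but it has $\partial_z^3\tilde g(0,\lambda_0)=0$ and a single critical point persisting through $\lambda_0$. The family $z^3+(\lambda-\lambda_0)^2z+Q(y)$ also meets it, but it has $\partial_\lambda\partial_z\tilde g(0,\lambda_0)=0$ and no critical points for $\lambda\neq\lambda_0$. Neither can be brought to the fold form, even after reparametrizing $\lambda$. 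So $\partial_z^3\tilde g(0,\lambda_0)\neq0$ and $\partial_\lambda\partial_z\tilde g(0,\lambda_0)\neq0$ must be added as genuine hypotheses, as you partly suggest. Together they say the path crosses a birth--death point transversally to $\mathcal{G}^1_\alpha$. They come from Cerf's genericity of the path, not from isolatedness.
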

With the definition of an isolated cubic singularity in hand, we may define a Cerf path, which should be thought of as the analog for paths of what a Morse function is to a smooth function.
\begin{definition}\label{cerfpath}
    A \emph{Cerf path} is a smooth path $g(x,\lambda)$ in $\mathcal{G}^0\cup \mathcal{G}^1\subset \mathcal{G}$ which intersects $\mathcal{G}^1:=\mathcal{G}^1_\alpha\cup \mathcal{G}^1_\beta$ at finitely many parameter values $\lambda_1,...,\lambda_k$ such that for $i=1,...,k$, $g(x,\lambda_i-\epsilon)$ and $g(x,\lambda_i+\epsilon)$ belong to different components of $\mathcal{G}^0$ for sufficiently small $\epsilon$.
\end{definition}
The following theorem is due to Cerf, and is the analog for smooth paths of the theorem that Morse functions are generic in the space of smooth functions.
\begin{theorem}\cite[~$\S$ 3, p.24]{Cerf1}\label{opendense}
    The set of Cerf paths is open and dense in the space of smooth paths in $\mathcal{G}$.
\end{theorem}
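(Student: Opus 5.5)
The plan is to derive openness directly from stability of transversal intersections, and density from Thom's jet and multijet transversality theorems applied to the map $(x,\lambda)\mapsto j^k g_\lambda(x)$ on $X\times[0,1]$. We then read off the allowed singularities by a codimension count. To respect the constraints defining $\mathcal{G}$, we only perturb a given path $g_\lambda$ by functions supported in a compact subset $K\subset \Int(X)$. Near $\partial X$ the functions $g_\lambda$ already have no critical points and satisfy $g_\lambda^{-1}(i)=X_i$. Since $[0,1]$ is compact, this holds uniformly on a collar, so sufficiently small perturbations supported away from the collar stay in $\mathcal{G}$. Choosing families of perturbations $g_\lambda+\sum_j s_j\rho_j(x,\lambda)$ with enough bump functions $\rho_j$ (polynomials of degree $\le 3$ in local charts, cut off) makes the evaluation map into jet space a submersion. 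The parametric transversality theorem then gives, for a residual set of parameters $s$, transversality to any fixed finite collection of submanifolds of jet or multijet space.

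\textbf{Codimension count.} In the $3$-jet bundle over $X\times[0,1]$ (dimension $n+1$), consider the following strata.
\begin{itemize}
\item The critical set $\{dg=0\}$ has codimension $n$.
\item Inside it, $\{\text{corank Hess}\ge 1\}$ has codimension $n+1$.
\item $\{\text{corank}\ge 2\}$ has codimension $n+3$.
\item $\{\text{corank}=1,\ \text{cubic term on the kernel}=0\}$ has codimension $n+2$.
\end{itemize}
Transversality therefore forces the path to avoid the last two strata entirely. It meets the corank-one stratum in finitely many isolated points $(x_0,\lambda_0)$, and transversality there is exactly the condition $\partial_\lambda\partial_z g\neq 0$ needed to apply Proposition~\ref{whitney}, so these points are isolated cubic singularities in Whitney form.

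For coincidences of values, we use the $2$-fold multijet bundle over $(X\times X\setminus\Delta)\times[0,1]$ (dimension $2n+1$). The condition $\{dg(x_1)=dg(x_2)=0,\ g(x_1)=g(x_2)\}$ has codimension $2n+1$. Transversality gives finitely many crossing instants at which $\tfrac{d}{d\lambda}\bigl(g_\lambda(x_1(\lambda))-g_\lambda(x_2(\lambda))\bigr)\neq0$, so the critical values cross strictly. The remaining bad events are avoided by the same argument:
\begin{itemize}
\item a triple coincidence has codimension $3n+2>3n+1$;
\item a coincidence combined with a degenerate point has codimension $2n+2>2n+1$;
\item two simultaneous degenerate points have codimension $2n+2>2n+1$;
\item a crossing at the same instant as another event is of higher codimension in the appropriate multijet space.
\end{itemize}
Together these show that the perturbed path lies in $\mathcal{G}^0\cup\mathcal{G}^1$ and meets $\mathcal{G}^1$ only at finitely many $\lambda_i$, each of one of the two generic types.

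\textbf{Openness.} Each property above is a transversality (or avoidance) condition over a compact parameter domain, once we stay away from the diagonal. Near the diagonal, nondegenerate critical points are isolated, so crossings involve critical points a definite distance apart. Such conditions are open in the $C^\infty$ topology on paths, so nearby paths have the same finite number of events, each perturbed slightly and of the same type. This yields openness.

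\textbf{Main obstacle.} The hard part will be the last clause of Definition~\ref{cerfpath}: that $g_{\lambda_i\pm\epsilon}$ lie in \emph{different components} of $\mathcal{G}^0$. For births and deaths this is clear, since the number of critical points changes by two and is locally constant on $\mathcal{G}^0$. For crossings, the strict transversal crossing reverses the order of the two critical values. I would show that the ordered sequence of critical points, tracked by their indices and relative position, is an invariant of components of $\mathcal{G}^0$, since excellent Morse functions connected in $\mathcal{G}^0$ have isotopic, order-preserving critical data. I expect this to be exactly where care is needed: when the two crossing points have the same index, one must use a finer invariant than the sequence of indices. Alternatively, one may interpret ``different components'' locally, meaning the two sides of a local wall of $\mathcal{G}^1$, which is what transversality directly provides.
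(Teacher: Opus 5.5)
Your argument is correct in substance, and it takes a genuinely different route from the one the paper relies on. The paper gives no proof of its own; it cites Cerf. Cerf works in the infinite-dimensional space $\mathcal{G}$ itself. He establishes its natural stratification: $\mathcal{G}^0$ is open and dense, $\mathcal{G}^1$ is a codimension-one submanifold with explicit local models, and what remains has codimension at least two. He then proves that generic paths are transverse to this stratification.

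You instead apply jet and multijet transversality on the finite-dimensional manifold $X\times[0,1]$ and read off the events by codimension counts. Your computation that transversality to the corank-one stratum amounts to $\partial_\lambda\partial_z g\neq 0$, given a nonzero cubic term, is exactly what is needed to put each cusp in the Whitney form of Proposition~\ref{whitney}.

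\emph{What each route buys.} Yours is more elementary and self-contained, but it only describes what happens along the path. Cerf's gives the global geometry of the walls in $\mathcal{G}$, which is what the later two-parameter arguments (pseudo-isotopy, uniqueness of birth) use.

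\emph{A small completion.} Isolation of nondegenerate critical points is not the whole reason coincidences stay a definite distance from the diagonal. The only critical points that can approach each other are the two created or destroyed at a cusp. The Whitney form shows their critical values differ for $\lambda\neq\lambda_0$, and that is what keeps coincidences away from the diagonal.

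\emph{The last clause of Definition~\ref{cerfpath}.} Drop the plan of finding a global invariant of components of $\mathcal{G}^0$ that separates the two sides of a same-index crossing; no such invariant can exist. Here is a mechanism that defeats it.
\begin{itemize}
\item Suppose $g_{\lambda_i}$ is invariant under an involution $\phi$ of $X$ that preserves $X_0$ and $X_1$, exchanges the two crossing critical points $a_1,a_2$, and fixes the others. Suppose also that $\phi$ is isotopic to the identity through such diffeomorphisms $\phi_t$.
\item For instance, on $D^2\times[0,1]$ take $\phi$ to be rotation of the disk by $\pi$. Take $g_{\lambda_i}$ to be a symmetric ``crater on a slope'': a peak and a pit on the axis, and two passes exchanged by $\phi$.
\item Put $g_\lambda=g_{\lambda_i}+(\lambda-\lambda_i)(\beta_1-\beta_2)$, with a bump $\beta_1$ near $a_1$ and $\beta_2=\beta_1\circ\phi$.
\item Then $g_{\lambda_i+\epsilon}=g_{\lambda_i-\epsilon}\circ\phi$, so $t\mapsto g_{\lambda_i-\epsilon}\circ\phi_t$ joins the two sides inside $\mathcal{G}^0$.
\item Components of $\mathcal{G}^0$ are open, so every path near this one also fails the literal condition. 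Read globally, the statement would therefore be false.
\end{itemize}

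The clause has to be read locally, as Cerf intends: near $g_{\lambda_i}$ the complement of $\mathcal{G}^1$ has two local sides, and the path passes from one to the other. Your transversality conditions give exactly this: $\partial_\lambda\partial_z g\neq 0$ at births and deaths, and $\frac{d}{d\lambda}\bigl(g_\lambda(x_1(\lambda))-g_\lambda(x_2(\lambda))\bigr)\neq 0$ at crossings. With this reading your proof is complete.
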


Given a smooth path $g$ and any $\lambda\in \Lambda$, let $\textrm{Crit}_g(\lambda)$ be the set of critical values of $g(x,\lambda)$. 
\begin{definition}
     The \emph{Cerf graphic} of $g$ is \[\bigcup_{\lambda\in \Lambda} (\lambda, \textrm{Crit}_g(\lambda))\subset \Lambda\times [0,1].\] 
\end{definition}

As a consequence of the Whitney coordinates, the Cerf graphic of an isolated cubic singularity is a cusp at $\lambda_0$, as seen in the case of a cubic birth singularity in Figure \ref{fig:cusp}.
\begin{figure}
    \centering
    \includegraphics[width=.15\linewidth]{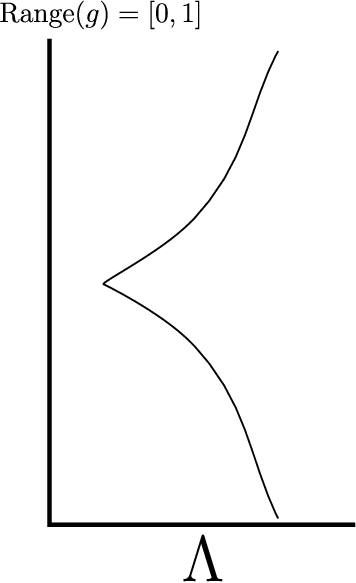}
    \caption{A cubic birth singularity in a Cerf graphic.}
    \label{fig:cusp}
\end{figure}
\begin{theorem}[Pseudo-isotopy theorem]\label{pseudoisotopy}
Assume that $X$ is simply connected and $\dim(X)\ge 5$. If $g(x,\lambda)$ is a smooth path which has a Cerf graphic that is empty at $\lambda=0$ and $\lambda=1$, then there exists a smooth two parameter family $g(x,\lambda,\sigma)$ extending $g(x,\lambda)$ rel $g(x,0)$ and $g(x,1)$ such that $g(x,\lambda,1)$ is a path with empty Cerf graphic.
\end{theorem}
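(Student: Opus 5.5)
The plan is to follow Cerf's original strategy \cite{Cerf1}: simplify the Cerf graphic of the given path step by step, each step being a deformation of the path relative to its endpoints, until the graphic is empty. Since $g(x,0)$ and $g(x,1)$ have no critical points, I first use Theorem~\ref{opendense} to perturb $g$, rel endpoints and through a small homotopy that is the first piece of the two-parameter family, to a Cerf path. Its graphic is then a finite union of arcs of Morse critical values. These arcs begin and end only at cubic birth and death cusps, which have the normal form of Proposition~\ref{whitney}, and cross each other transversally at points of $\mathcal{G}^1_\beta$. Because the graphic is empty at both ends, every arc is born and eventually dies. Each deformation below is realized by a smooth two-parameter family $g(x,\lambda,\sigma)$, and these families are concatenated in $\sigma$.

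First I normalize the graphic. I will use Cerf's elementary moves: sliding crossings, and the ``beak'' and ``swallowtail'' moves that create or cancel pairs of cusps. With these I push all births to the left of all deaths, so that every index-$i$ birth occurs before every death. Next I use handle trading, which relies on simple connectivity together with $\dim X\ge 5$, to raise or lower indices along the arcs. After this step all critical points have two adjacent indices $i$ and $i+1$, with $2\le i$ and $i+1\le \dim X-3$. The graphic is now a family of ``nested eyes'': $r$ births, then a middle interval on which the function is Morse with $r$ pairs of critical points of indices $i$ and $i+1$, then $r$ deaths. At a generic middle time, each birth determines a matching of index-$(i+1)$ with index-$i$ critical points in which the matched descending sphere meets the ascending sphere geometrically once, and each death determines such a matching as well. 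The two matchings are related by an incidence matrix $P\in GL_r(\Z)$, using $\pi_1=1$.

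The main obstacle is to make the birth matching and the death matching coincide, so that each eye closes on its own born pair and can then be cancelled by Cerf's uniqueness-of-death and eye-elimination lemma. Algebraically, handle slides between critical points with equal index realize elementary row operations on $P$. Reordering critical values via crossings, and the eye-rearranging moves, realize permutations and signs. Since $\mathrm{Wh}_2(1)=0$, or more concretely since $SL_r(\Z)$ is generated by elementaries and $K_2$ obstructions vanish for the trivial group, $P$ can be reduced to the identity. The geometric part is harder: each algebraic intersection number $\pm1$, or $0$ for off-diagonal entries, must be realized by a geometric intersection. This needs a one-parameter Whitney trick applied along the middle interval. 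I expect this to be the delicate point, requiring the codimension bounds coming from $\dim X\ge 5$ and the index range above, and I would try first to carry out the Whitney trick at a single time and then extend it over the whole interval by isotopy extension. Once the matchings agree, each eye is removed by a deformation supported near it, and the resulting path $g(x,\lambda,1)$ has empty Cerf graphic.
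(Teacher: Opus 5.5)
The paper gives no proof of this statement; it is quoted from Cerf \cite{Cerf1}. Your outline follows the right route, the Cerf/Hatcher--Wagoner one. However, the step you call the main obstacle is set up at the wrong algebraic level, and the geometric step is not carried out.

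\emph{The algebra.} At a fixed time the incidence matrix $P$ depends on the choice of gradient-like field. Changing that field by handle slides multiplies $P$ by elementary matrices without touching the function. So reducing $P$ to the identity via $SL_r(\Z)=E_r(\Z)$ is the $K_1$ step of the $h$-cobordism theorem, and it deforms nothing.

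Nor are both matchings geometric at a common middle time, as you claim. Only the algebraic $\pm1$ can persist. The born pairs cancel geometrically just after the births, and the dying pairs just before the deaths. What connects them is the \emph{word} of handle slides along the middle interval, whose product is the monomial matrix comparing the two matchings. Read in the Steinberg group and corrected by a standard lift of that monomial matrix, this word defines a class in $K_2(\Z)$.

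The missing argument has two parts. First, the Steinberg relations, and the moves that change birth/death pairings, must be realized by two-parameter deformations of the path. Second, using these, the word must be brought to one in which each pair stays geometrically cancelling throughout its own eye. Only then does eye elimination apply; coinciding matchings alone do not suffice.

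Note also that $K_2(\Z)\cong\Z/2$, generated by $\{-1,-1\}$, is not zero. What vanishes is $\mathrm{Wh}_2(1)=K_2(\Z)/\langle\{-1,-1\}\rangle$, so you must also show that $\{-1,-1\}$ is realized by a trivial deformation.

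\emph{The geometry.} The step fails as proposed. Along the middle interval, intersection points of descending and ascending spheres are born and die in pairs at tangency times, and they change at slide times. So a Whitney disk chosen at one time cannot be carried across the interval by isotopy extension. A genuinely one-parameter argument is needed: Cerf's independent-trajectory lemmas and a parametrized Whitney trick. In the non-simply-connected case this is where the second obstruction $\mathrm{Wh}_1^+(\pi_1;\Z/2\times\pi_2)$ arises; it vanishes for $\pi_1=1$. This is the bulk of \cite{Cerf1}, and your proposal defers it.

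\emph{The dimension.} Your index window $2\le i$, $i+1\le\dim X-3$ is empty when $\dim X=5$. Cerf's theorem needs $\dim M\ge5$, i.e.\ $\dim X\ge6$. That is what the paper's introduction states and what Theorem~\ref{saddlenodethm} actually uses ($n\ge6$). The case $\dim X=5$ allowed in the displayed statement is covered neither by Cerf nor by your argument.
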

An example of the pseudo-isotopy theorem is given in Figure \ref{pseudoisotopyfig}.

\begin{figure}
    \centering
    \includegraphics[width=0.5\linewidth]{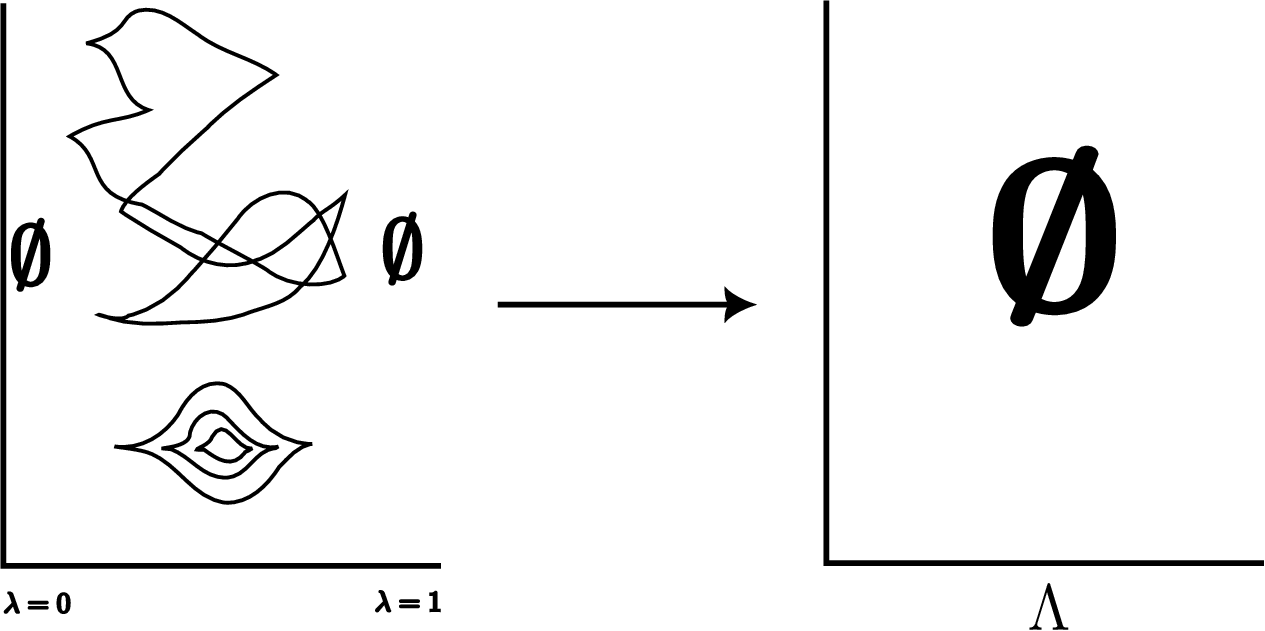}
    \caption{Pictured above are two \emph{Cerf graphics:} diagrams displaying the critical points of a smooth family of generalized Morse functions $g:M\times [0,1]\to \R$ on a manifold $M$. Left: A Cerf graphic which is empty at parameter values $\lambda=0$ and $\lambda=1.$ Right: A Cerf graphic which is empty at all parameter values $\lambda$. The pseudo-isotopy theorem of Jean Cerf states that if the $\dim(M)\ge 5$ and $M$ is simply connected, there is a two parameter family of smooth functions interpolating the two Cerf graphics.}
    \label{pseudoisotopyfig}
\end{figure}

The last result that we need from Cerf theory is  \emph{uniqueness of birth} \cite[~Chapter 3, $\S$1]{Cerf1}.

\begin{proposition}[Uniqueness of birth]\label{unique}
Assume that $\dim(X)\ge 6$. If $g(x,\lambda)$ is a path of Morse functions with Cerf graphic consisting of two disjoint arcs of critical points $p_\lambda$ and $q_\lambda$ such that $p_\lambda$ and $q_\lambda$ are algebraically canceling for some $\lambda_0\in \Lambda$, then there is a two parameter family $g(x,\lambda,\sigma)$ for $\sigma\in[0,1]$ extending $g(x,\lambda)$ rel $g(x,0)$ and $g(x,1)$ such that the Cerf graphic of $g(x,\lambda,1)$ consists of a cubic death singularity followed by a cubic birth singularity.
\end{proposition}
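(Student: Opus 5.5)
The plan is to reduce the proposition to the Morse cancellation lemma applied in a short parameter window around $\lambda_0$, with all modifications supported near $\lambda_0$ so that $g(x,0)$ and $g(x,1)$ are left untouched.

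\emph{Step 1: normalize near $\lambda_0$.} Algebraic cancellation means the indices of $p_{\lambda_0}$ and $q_{\lambda_0}$ differ by one, say $\operatorname{ind} q = \operatorname{ind} p + 1 = j+1$. It also means that, in a regular level $V$ between the two critical values, the descending sphere $S^-(q)$ and the ascending sphere $S^+(p)$ have intersection number $\pm 1$ for a gradient-like vector field. Since the Cerf graphic consists only of the two arcs, the critical values are distinct at $\lambda_0$. After a small perturbation of the gradient-like field we may assume $S^-(q)\pitchfork S^+(p)$ in $V$. Because transversality is an open condition, this persists on a window $[\lambda_0-\epsilon,\lambda_0+\epsilon]$.

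\emph{Step 2: geometric cancellation by the Whitney trick.} The level $V$ has dimension $\dim X-1\ge 5$, and $S^-(q)$, $S^+(p)$ have dimensions $j$ and $\dim X-j-2$. Using simple connectivity of $X$, and of $M$ and $V$ (via the usual handle-trading argument, which removes handles of index $0,1$ and their duals), the Whitney trick applies. We isotope $S^-(q)$ in $V$ until it meets $S^+(p)$ transversally in exactly one point. Two points need care:
\begin{itemize}
\item We do this at $\lambda_0$, realize the isotopy by modifying the gradient-like field in a collar of $V$, and extend the modification over the window using a cutoff in $\lambda$.
\item Where the arcs $p_\lambda,q_\lambda$ are in the wrong order of critical values, we first raise $q$ and lower $p$ near $\lambda_0$ by a rearrangement supported in the window. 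This is possible because there are no other critical points.
\end{itemize}
The result is a $\sigma$-family, constant outside the window, ending at a path $g_1$ whose stable and unstable manifolds of $q_\lambda$ and $p_\lambda$ meet in a single trajectory $\gamma_\lambda$ for $|\lambda-\lambda_0|\le\epsilon$.

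\emph{Step 3: parametrized cancellation.} Along a tubular neighborhood $U_\lambda$ of $\gamma_\lambda\cup\{p_\lambda,q_\lambda\}$ we use the coordinates from the proof of the cancellation lemma. In these coordinates $g_1$ has the form $Q(y)+h_\lambda(z)$, where $h_\lambda$ is a one-variable function with one local maximum and one local minimum. We then deform $h_\lambda$ through the family
\[
h_{\lambda,\sigma}(z)=z^3+\bigl(\sigma\,\mu(\lambda)-(1-\sigma)\bigr)z ,
\]
suitably matched at the boundary of $U_\lambda$, where $\mu$ is a bump function that is positive on $(\lambda_0-\epsilon/2,\lambda_0+\epsilon/2)$ and negative near $\pm\epsilon$. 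At $\sigma=1$ the pair disappears exactly on an interval, with a cubic death at one end and a cubic birth at the other, and Proposition~\ref{whitney} describes the local form at these two points. For $\sigma<1$ the family stays in $\mathcal{G}$ with critical points only in $U_\lambda$, and no new critical points appear outside $U_\lambda$. Concatenating the $\sigma$-families from Steps 1--3 gives the required $g(x,\lambda,\sigma)$.

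\emph{Main obstacle.} The main obstacle is Step 2: carrying out the Whitney trick uniformly in $\lambda$ and checking the codimension and simple-connectivity hypotheses. The requirement $\dim X\ge 6$ is used so that both spheres and the Whitney disks live in a level of dimension at least $5$ for all relevant indices. I would first do the trick at the single value $\lambda_0$ and then use the openness of transversality, together with a $\lambda$-cutoff of the ambient isotopy, to propagate it.
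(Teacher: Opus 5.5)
Your route matches what the paper intends. The paper gives no proof of its own: it cites Cerf's uniqueness-of-birth lemma and notes that Smale's cancellation lemma is used first to upgrade algebraic to geometric cancellation. Your Steps 1--2 are the Smale part, and Step 3 is the insertion direction of Cerf's birth--death lemma in families. The genuine gap is in Step 2, in the parenthetical that disposes of low indices. Handle trading is not available here. It introduces an extra cancelling pair and replaces $p$ by a critical point of another index, so the path no longer has a graphic of two arcs, and the final graphic would not be just a death followed by a birth. Nor is $V$ simply connected in general. If $\operatorname{ind}p=1$, then $V$ is $M$ with a $1$-handle attached and $\pi_1(V)\cong\Z$, so the Whitney trick as you invoke it does not apply. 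These cases need direct (easy) arguments:
\begin{enumerate}
\item[(a)] For indices $(0,1)$, $S^+(p)$ is an entire sphere component of $V$ and $S^-(q)$ is two points of opposite sign. Intersection number $\pm1$ therefore already means exactly one of them lies on $S^+(p)$, so the cancellation is geometric.
\item[(b)] For indices $(1,2)$, $S^-(q)$ is an embedded circle with intersection number $\pm1$ against the belt sphere $S^+(p)$, so it represents a generator of $\pi_1(V)\cong\Z$. Since $\dim V\ge 5$, homotopic embedded circles are isotopic, so it can be isotoped to meet $S^+(p)$ once.
\item[(c)] The cases $(\dim X-2,\dim X-1)$ and $(\dim X-1,\dim X)$ follow by applying (a) and (b) to $1-g$.
\end{enumerate}
Your Whitney-trick argument is valid as written only for $2\le j\le \dim X-3$, with the $\pi_1$-injectivity hypothesis checked when $j=2$ or $j=\dim X-3$. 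Also, $S^+(p)$ has dimension $\dim X-j-1$, not $\dim X-j-2$; with your count the two spheres are not complementary in $V$ and the intersection number is undefined.

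Two further points. First, the ``wrong order'' bullet is vacuous. Algebraic cancellation at $\lambda_0$ presupposes $g(q_{\lambda_0})>g(p_{\lambda_0})$, and disjointness of the arcs then gives this for all $\lambda$. If it were needed, a rearrangement confined to a window would add two crossings to the final graphic, contradicting the conclusion. Second, you have put the main obstacle in the wrong place. The Whitney trick is needed at only one parameter and propagates by openness, as you say. The substantive step is Step 3. The proof of the cancellation lemma normalizes the gradient-like field near $\gamma_\lambda$, not the function. So your assertion that $g_1=Q(y)+h_\lambda(z)$ on $U_\lambda$, uniformly in $\lambda$, with an interpolation at $\partial U_\lambda$ that creates no new critical points, is exactly the content of Cerf's lemma that the paper cites. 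You should either cite it or prove such a standard-form lemma. Finally, take $\mu\equiv-1$ near $\lambda_0\pm\epsilon$, not merely $\mu<0$, so that your family is independent of $\sigma$ there and glues to the unmodified path.
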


Uniqueness of birth is illustrated in Figure \ref{fig:uniquenessbirth}. Technically we are using Smale's cancellation lemma in conjunction with uniqueness of birth in the above proposition to allow for algebraically instead of geometrically canceling critical points in its hypotheses. 
\begin{figure}
    \centering
    \includegraphics[width=0.5\linewidth]{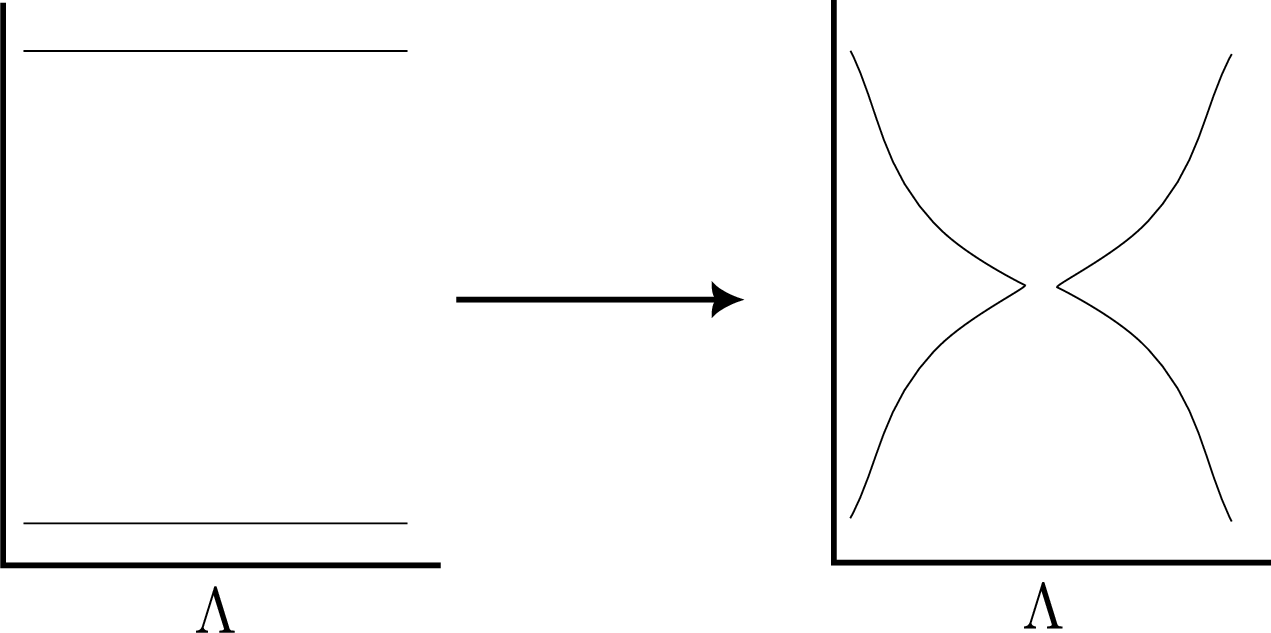}
    \caption{Uniqueness of birth}
    \label{fig:uniquenessbirth}
\end{figure}

\section{Result}\label{proof}
In Proposition~\ref{reineckparameterized} below, we prove a one-parameter generalization of the following theorem due to J.\ Reineck .
\begin{theorem}\cite{Reineck2}\label{reineck2}
Let $B$ be an isolating block for $\frac{dx}{dt} = f(x)$, $x\in \R^n$.
Then, there exists a one parameter family of ODEs 
\begin{equation}
\label{eq:R2}
\frac{dx}{dt} = F(x,\sigma), \quad\sigma\in [0,1],   
\end{equation} 
such that 
\begin{enumerate}
    \item $F(x,0) = f(x)$
    \item $B$ is an isolating neighborhood for \eqref{eq:R2} for all $\sigma\in [0,1]$,
    \item there exists $B'\subset \Int(B)$ such that $\Inv(B') = \Inv(B)$ for all $\sigma\in [0,1]$, and 
    \item there exists a Morse function $g\colon B \to \R$ such that $F(x,1)\big|_{B'} = -\nabla g(x)$.
\end{enumerate}
\end{theorem}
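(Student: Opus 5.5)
The statement to prove is Theorem~\ref{reineck2} (Reineck's theorem). My plan is to build the homotopy in two stages. The first stage replaces the flow by a gradient-like flow near the invariant set. The second replaces that gradient-like flow by an honest negative gradient of a Morse function on a smaller block.

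\textbf{Step 1: a Lyapunov function and the smaller block.} Let $S=\Inv(B)$. Since $B$ is an isolating block, I would use the entrance set $\partial B^+$ and exit set $\partial B^-$ to construct a smooth Lyapunov-type function $L\colon B\to[0,1]$. I want $L=1$ on $\partial B^+$ and $L=0$ on $\partial B^-$, and $L$ strictly decreasing along nonconstant orbit segments outside a small neighborhood $U$ of $S$; this is the standard Conley--Wilson construction, smoothed. Choose $B'\subset\Int(B)$ to be a slightly shrunken isolating block containing $U$. It is obtained by flowing the entrance and exit sets inward for a small time, so that $\Inv(B')=S$.

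\textbf{Step 2: perturb to Morse--Smale gradient-like dynamics inside $B'$.} This is the core step and the main obstacle. Inside $\Int(B')$, I would first approximate $f$ by a generic vector field whose chain recurrent set consists of finitely many hyperbolic equilibria with transverse stable and unstable manifolds. Hyperbolic periodic orbits must then be removed, and removing them requires canceling them against pairs of equilibria via a local model (Reineck's ``replacing periodic orbits by fixed points'' construction). Realizing this with the invariant set and isolation controlled throughout the homotopy is the hardest part. My first approach would be continuation of the Conley index: along the straight-line homotopy between $f$ and the perturbed field, $B$ remains an isolating neighborhood, because the perturbation is $C^0$-small near $\partial B$ and vanishes there. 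For the resulting Morse--Smale field with no periodic orbits, Smale's theorem gives a Morse function $g$ on $B'$ for which the field is gradient-like. Item (3) for intermediate $\sigma$ is arranged by only modifying $F$ inside $U$, so $B'$ still isolates $\Inv(B,\sigma)$.

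\textbf{Step 3: from gradient-like to gradient.} A gradient-like vector field $V$ for $g$ satisfies $dg(V)<0$ off the critical points and equals $-\nabla g$ in Morse charts near them. Therefore the convex combination $(1-s)V+s(-\nabla g)$, with $\nabla g$ computed in a suitable metric, has $dg<0$ away from the critical points for all $s\in[0,1]$. This combination has the same critical points, so it preserves the invariant set in $B'$ and keeps $\partial B$ isolating. To patch, I would use a cutoff $\rho$ that equals $1$ on $B'$ and $0$ near $\partial B$. On the collar $B\setminus B'$, the field is then an interpolation between two fields that both strictly decrease $L$ (respectively $g$, extended to agree with $L$ there). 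Hence no invariant set is created there, which gives items (2) and (4). Concatenating the homotopies of Steps 2 and 3 and reparametrizing in $\sigma$ yields $F(x,\sigma)$ with $F(x,0)=f(x)$.
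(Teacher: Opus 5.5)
\textbf{The gap is in Step 2.} You claim that $f$ can be approximated inside $B'$ by a generic field whose chain recurrent set is finitely many hyperbolic equilibria, plus finitely many hyperbolic periodic orbits that you then cancel. Genericity does not give this. Kupka--Smale yields hyperbolic critical elements and transverse intersections, not a finite recurrent set, and Morse--Smale fields are not dense once $n\ge 3$. If $S$ contains, say, a suspended Smale horseshoe, every $C^1$-nearby field still has a hyperbolic basic set with infinitely many periodic orbits. So no perturbation of the kind you describe makes the flow Morse--Smale, and a local cancellation of periodic orbits against pairs of equilibria can only remove finitely many of them.

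Your remark that a deformation vanishing near the boundary cannot destroy isolation is correct, and it holds for deformations of any size supported in $\Int B'$. So size is not the obstruction. The obstruction is that you give no construction that removes arbitrary recurrence in $S$. Your appeal to Smale's theorem and all of Step 3 depend on having such a field, so the argument does not cover a general isolated invariant set.

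\textbf{The missing idea.} The paper does not prove this theorem itself; it cites Reineck. It does prove the parameterized analogue, Proposition~\ref{reineckparameterized}, and says that proof follows Reineck's. That argument skips the Morse--Smale stage entirely and applies your Step 3 convex combination directly to $f$.

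Take a smooth Lyapunov function $g$ for $S$ on a neighborhood of $B$ that is constant on $S$ and satisfies $\langle\nabla g,f\rangle<0$ on all of $B\setminus S$. Your $L$ is strict only outside a neighborhood $U$ of $S$, which is not enough. Set $F(x,\sigma)=\rho(x)\bigl[(1-\sigma)f(x)-\sigma\nabla g(x)\bigr]+(1-\rho(x))f(x)$, with $\rho=1$ on $B'$ and $\rho=0$ off $B$.

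Then for every $\sigma$ you have $\langle\nabla g,F(\cdot,\sigma)\rangle\le 0$ on $B$, with strict inequality off $S$. So $g$ is a Lyapunov function for the whole family. Every full orbit in $B$ then has constant $g$, hence lies in $S\subset\Int B'$. This gives (2) and (3) with no analysis of the recurrent dynamics inside $S$.

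At $\sigma=1$ the field on $B'$ is exactly $-\nabla g$. Isolating blocks persist under small perturbations (Conley--Easton), so a further small homotopy replacing $g$ by a nearby Morse function gives (4).
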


\begin{proposition}\label{reineckparameterized}
Consider a parameterized system of ODEs $\frac{dx}{dt}=f(x,\lambda)$, where $x\in \R^n$ and $\lambda \in \Lambda$. Assume that $B$ is an isolating block for all $\lambda\in \Lambda$.
Then, there is a two parameter family of ODEs \begin{equation}\frac{dx}{dt}=F(x,\lambda,\sigma),\quad\sigma\in [0,1]\end{equation} such that
\begin{enumerate}
    \item $F(x,\lambda,0)=f(x,\lambda)$
    \item $B$ is an isolating neigbhorhood for all $\lambda\in \Lambda$, $\sigma\in[0,1]$
    \item there exists $B'\subset \Int(B)$ such that $\Inv(B'_\lambda)=\Inv(B_\lambda)$ for all $(\lambda,\sigma)\in\Lambda\times [0,1]$ and
    \item there is a Cerf path $g(x,\lambda)$ such that $F(x,\lambda,1)\Big|_{B'}=-\nabla g(x,\lambda)$.
\end{enumerate}
\end{proposition}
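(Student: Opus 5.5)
The plan is to run Reineck's construction (Theorem~\ref{reineck2}) with $\lambda$ as an extra parameter. The first step replaces $f$ inside $B$ by a gradient-like field for a Lyapunov function. Since $B$ is an isolating block for every $\lambda\in\Lambda$, compactness of $\Lambda$ and continuity of the flow yield a single $B'\subset\Int(B)$, which we may take to be a slightly shrunken copy of $B$ obtained by flowing in from $\partial B^{+}$ and $\partial B^{-}$. This $B'$ is an isolating block for $\phi_\lambda$ for all $\lambda$ and satisfies $\Inv(B'_\lambda)=\Inv(B_\lambda)$. The collar $\cl(B\setminus B')$ will serve as the region in which all interpolations take place, so nothing changes near $\partial B$ and $B$ stays isolating throughout.

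Next we build a continuous family of Lyapunov functions $V_\lambda$ on $B$. For each $\lambda$, Reineck's argument uses a Conley--Lyapunov function that is strictly decreasing off $\Inv(B_\lambda)$ and is adapted to the block. The standard construction, $V(x)=\int_0^\infty e^{-t}\rho(\phi(t,x))\,dt$ with distance-to-entrance and distance-to-exit functions, depends continuously on parameters. Applying it to the total flow $\Phi$ on $B\subset\R^n\times\Lambda$ therefore gives a function $V(x,\lambda)$ that is Lyapunov for each slice. We then smooth $V$ in $x$ and $\lambda$ jointly, keeping strict decrease on the compact set where it already holds.

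The third step perturbs toward a Cerf path. Reineck's key move is to deform, through a one-parameter family of vector fields, the field $f_\lambda$ on $B'$ to $-\nabla g$, where $g$ is a perturbation of the Lyapunov function. We do this uniformly in $\lambda$. In a neighborhood of $\Inv(B_\lambda)$ the invariant set may be wild, so Reineck first uses continuation: he replaces the dynamics by a gradient flow whose index data match. In the parameterized setting we view $\lambda\mapsto V(\cdot,\lambda)|_{B'}$ as a smooth path of functions with the prescribed boundary behaviour, which is essentially a path in $\mathcal{G}$ once $B'$ is written as $M\times[0,1]$ via the block structure. By Theorem~\ref{opendense}, an arbitrarily small perturbation gives a Cerf path $g(x,\lambda)$. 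We set $F(x,\lambda,\sigma)$ to be a convex combination $(1-\sigma)f+\sigma(-\nabla g)$ on $B'$, cut off by a bump function in the collar. Smallness of the perturbation, together with the fact that both $f$ and $-\nabla g$ point transversally across $\partial B'$ in the same direction, keeps $B$ and $B'$ isolating for all $(\lambda,\sigma)$.

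The main obstacle is showing that the straight-line homotopy, which need not preserve the invariant set inside $B'$, still gives $\Inv(B'_\lambda)=\Inv(B_\lambda)$ for every $\sigma$. Here I would follow Reineck and verify transversality only on $\partial B'$ and in the collar, where $f$ and $-\nabla g$ are both transverse to level sets of $V$. This makes every orbit in $\cl(B\setminus B')$ leave in finite time in both directions, so any invariant set of $B_\lambda$ lies in $B'_\lambda$, with $\lambda$-uniform constants coming from compactness of $\Lambda\times[0,1]$. A secondary difficulty is making the Lyapunov function smooth in $\lambda$ at parameter values where $\Inv(B_\lambda)$ changes discontinuously. I would handle this by working with the total flow $\Phi$, for which $B$ is a single block, rather than slice by slice.
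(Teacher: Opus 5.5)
Your overall plan matches the paper's: a Lyapunov function for the total flow $\Phi$, a cutoff supported in $\Int(B)$, a straight-line homotopy to a gradient field, and Theorem~\ref{opendense} to obtain a Cerf path. You are also right that item (2) is automatic, because the field is untouched near $\partial B$.

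The gap is in item (3), which is exactly the step you flag as the main obstacle. Strict decrease of $V$ along $(1-\sigma\rho)f-\sigma\rho\nabla g$ on the collar $\cl(B\setminus B')$ only shows that orbits leave the \emph{collar} in both time directions. It does not show that they leave $B$: an orbit may exit $B'$, cross the collar with $V$ decreasing, and re-enter $B'$. Inside $B'$ you have no control over $V$. You homotope to $-\nabla g$ with $g$ a Cerf perturbation of $V$, so the term $-\sigma\langle\nabla V,\nabla g\rangle$ can be positive near $S$, where $\nabla V$ is small compared with $\nabla V-\nabla g$. That is precisely where the perturbation to a Cerf path acts. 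So your argument does not exclude invariant sets of $B_\lambda$, such as periodic or connecting orbits, that pass through the collar.

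Your auxiliary claim that $f$ and $-\nabla g$ cross $\partial B'$ in the same direction is also unsupported. The inequality $\langle\nabla V,f\rangle<0$ says that both fields cross \emph{level sets of} $V$ the same way. It says nothing about $\partial B'$ unless the entrance and exit parts of $\partial B'$ are level sets of $V$. Moreover, a $B'$ obtained by flowing in for a short time from $\partial B^{\pm}$ still meets $\partial B^t$ along long tangency segments, so it need not lie in $\Int(B)$.

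The missing idea is a \emph{single} Lyapunov function that works for every intermediate field. The paper obtains it by homotoping to $-\nabla V$ for the Lyapunov function $V$ itself. Then $\langle\nabla V,\rho[(1-\sigma)f-\sigma\nabla V]+(1-\rho)f\rangle<0$ on $U\setminus S$ for every $\sigma$. With $V$ constant on $S$, this forces every full orbit in $B_\lambda$ to lie in $S\subset\Int(B')$. Only afterwards is $V$ replaced by a Cerf path, via a perturbation small enough to preserve isolation (Conley--Easton).

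If you prefer to keep your order (perturb first, then interpolate), replace the collar argument by upper semicontinuity of invariant sets:
\begin{itemize}
\item For $g$ sufficiently close to $V$ (in $C^2$, say), your field $\rho[(1-\sigma)f-\sigma\nabla g]+(1-\rho)f$ is uniformly $C^0$-close to $\rho[(1-\sigma)f-\sigma\nabla V]+(1-\rho)f$.
\item The invariant set in $B$ of the latter field lies in $S\subset\Int(B')$ for every $(\lambda,\sigma)$.
\item A compactness argument over $\Lambda\times[0,1]$ then yields $\Inv(B_\lambda)\subset\Int(B'_\lambda)$ for your family, which gives item (3).
\end{itemize}
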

As mentioned already, the proof given below is essentially identical to the proof of Reineck's Theorem \ref{reineck2} except for the necessary modifications to start with a parameterized system of ODEs instead of the one ODE.
\begin{proof}
Say that the maximal invariant set of $B$ is $S$. 

Let $g(x,\lambda)$ be a smooth Lyapunov function for $S$ defined on a neighborhood $U$ of $B$, meaning that for $(x,\lambda)\in U\backslash S$, \[0>\langle \nabla g(x,\lambda), f(x,\lambda)\rangle.\] The existence of such a Lyapunov function is established in \cite[~Lemma 5.3]{Salamon1}.

    Let $B'$ be a compact neighborhood of $S$ such that $cl(B')\subset \textrm{int}(B)$ and let $\rho:\R^n\times[0,1]\to[0,1]$ be a smooth cutoff function satisfying
    \[\rho(x,\lambda)=\begin{cases}1&\textrm{ for }(x,\lambda)\in B'\\0&\textrm{ for }(x,\lambda)\in \R^n\backslash B\end{cases}.\] 
    Consider the two parameter family of vector fields $R(x,\lambda,\sigma)$ defined by \begin{equation}\label{vecfam}R(x,\lambda,\sigma)\coloneqq\rho(x,\lambda)[(1-\sigma)f(x,\lambda)-\sigma\nabla g(x,\lambda)]+(1-\rho(x,\lambda))f(x,\lambda).\end{equation} For any $\sigma,\lambda\in [0,1]$, $g$ is decreasing along orbits of $R(x,\sigma,\lambda)$ in $U\backslash S$ since
    \begin{equation}\label{decrease}
    \begin{split}
   \langle \nabla g(x,\lambda), f(x,\lambda,\sigma)\rangle&=\langle\nabla g(x,\lambda),\rho(x,\lambda)[(1-\sigma)f(x,\lambda)-\sigma\nabla g(x,\lambda)]+(1-\rho(x,\lambda))f(x,\lambda)\rangle\\&=(1-\sigma)\rho(x,\lambda)\langle \nabla g(x,\lambda), f(x,\lambda)\rangle-\sigma\rho(x,\lambda)\langle \nabla g(x,\lambda),\nabla g(x,\lambda)\rangle\\ 
    &+(1-\rho(x,\lambda))\langle\nabla g(x,\lambda),f(x,\lambda)\rangle<0.
    \end{split}
    \end{equation}

From Equation \ref{decrease}, we can see that $B$ is an isolating block for each $\sigma\in[0,1]$. Indeed, if $(x,\lambda)\in B\backslash S$ and $g(x,\lambda)\le 0$, follow the orbit of $(x,\lambda)$ along $\frac{dx}{dt}=R(x,\lambda,\sigma)$. Since $g(x,\lambda)$ is strictly decreasing along the orbit and all the critical points of $g(x,\lambda)$ are contained in the set $\{(x,\lambda)|g(x,\lambda)=0\}$, the orbit must leave $B$ in forward time due to compactness. Similarly, if $(x,\lambda)\in B\backslash S$ and $g(x,\lambda)\ge0$, the backward orbit of $(x,\lambda)$ must leave $B$. Hence, we see that $B$ is an isolating block for each $\sigma$. We identically conclude that $B'$ is an isolating neighborhood for each $\sigma$ and that the maximal invariant sets of $B$ and $B'$ agree for each $\sigma$. From Equation \ref{vecfam} and the definition of the cutoff function $\rho$ it is clear that we have achieved $(1),(2)$ and $(3)$ in the Proposition with $F=R$. In addition, $R(x,\lambda,1)|_{B'}=\nabla g(x,\lambda)$ so $(4)$ also holds with the caveat that $g(x,\lambda)$ is not necessarily a Cerf path yet.

By \cite[~Theorem 1.7]{ConleyEaston} there is an open set $V$ in the compact open topology about $\nabla g(x,\lambda)$ in the space of vector fields on $\mathbb{R}^n\times[0,1]$ such that for $\varphi\in U$, $B$ is an isolating block for $\varphi$. By Proposition \ref{opendense} Cerf functions are generic, and so we see that there exists a small perturbation $g(x,\lambda,\sigma)$ for $\sigma\in[0,1]$ so that $g(x,\lambda,1)$ is a Cerf path and $B$ is an isolating block for $\nabla g(x,\lambda,\sigma)$. If we define \[F(x,\lambda,\sigma)=\begin{cases}
    R(x,\lambda,2\sigma)&\textrm{for }0\le \sigma\le \frac{1}{2}\\-\rho(x,\lambda)\nabla g(x,\lambda,2\sigma-1)+(1-\rho(x,\lambda))f(x,\lambda)&\textrm{for }\frac{1}{2}\le \sigma\le 1
\end{cases}\]
then $(4)$ holds as well.
\end{proof}

With the aid of this technical proposition, we can now prove Theorem \ref{saddlenodethm}. We will assume that $B=B_0\times\Lambda$ throughout instead of $B$ smoothly isotopic to $B_0\times\Lambda$ since the isotopy only obscures notation. Therefore $B_\lambda=B_0\times\{\lambda\}$. 

\begin{proof}
   Recall that $B_0$ is a simple isolating neighborhood for the attractor-repellor pair $(A_0,A^*_0)$. Therefore, from Theorem \cite[~Theorem 2.2]{Reineck1} applied to the isolating blocks $B_A,B_{A^*}\subset B_0$, there is a one parameter family $f_0(x,\sigma)$ of vector fields that is constant with respect to $\sigma$ on $\R^n\backslash B_0$ and an isolating neighborhood $B_0'\subset B_0$ with the same invariant set such that $f_0(x,1)\Big|_{B'_0}=-\nabla g_0(x)$ where $g_0$ is a Morse function on $B'_0$ with two algebraically canceling critical points $p_0$ and $q_0$. One more application of Theorem \cite[~Theorem 2.2]{Reineck1} produces a one parameter family of vector fields $f_1(x,\sigma)$ that is constant with respect to $\sigma$ on $\R^n\backslash B_1$ and an isolating neighborhood $B_1'\subset B_1$ with the same invariant set such that $f_1(x,1)\Big|_{B_1'}=-\nabla g_1(x)$ for $g_1$ a Morse function with no critical points. Choose smooth cutoff functions $\eta,\xi\colon [0,1]\to[0,1]$ such that $\eta(\lambda)=1$ near  $\lambda=0$, $\eta(\lambda)=0$ away from $\lambda=0$ and $\xi(\lambda)=0$ away from $\lambda=1$, $\xi(\lambda)=1$ near $\lambda=1.$ Define a two-parameter family of vector fields by\[F_1(x,\lambda,\sigma)=f(x,\lambda)+\eta(\lambda)\bigl(f_0(x,\sigma)-f(x,0)\bigr)+\xi(\lambda)\bigl(f_1(x,\sigma)-f(x,1)\bigr).\]Since the endpoint extensions agree with the original endpoint vector fields near \(\partial B\), the correction terms vanish on a collar of \(\partial B\). 
    
    Making use of Proposition \ref{reineckparameterized}, there exists a two parameter family of vector fields $F_2(x,\lambda,\sigma)$ and an isolating block $B'\subset \textrm{int}((B_0'\cap B_1')\times\Lambda)$ with the same invariant set such that $F_2(x,\lambda,0)=F_1(x,\lambda,1)$ and $F_2(x,\lambda,1)\Big|_{B'}=-\nabla g(x,\lambda)$ for a Cerf path $g(x,\lambda)$. By Smale's cancellation lemma, we may perturb $g(x,\lambda)$ so that $p_0$ and $q_0$ are geometrically canceling. Then an application of Proposition \ref{unique} followed by an application of \ref{pseudoisotopy} allows us to conclude that there is a two parameter family $g(x,\lambda,\sigma)$ for $\sigma\in[0,1]$ such that $g(x,\lambda,0)=g(x,\lambda)$ and $g(x,\lambda,1)$ is a Cerf path whose graphic consists of $p_0$ and $q_0$ merging together in a cubic death singularity; see Figure \ref{check}. Define $F_3(x,\lambda,\sigma)$ so that $F_3(x,\lambda,0)=F_2(x,\lambda,1)$, $F_3(x,\lambda,\sigma)$ is constant with respect to $\sigma$ outside of $B$, and $F_3(x,\lambda,\sigma)|_{B'}=-\nabla g(x,\lambda,\sigma)$.

    \begin{figure}
    \centering
    \includegraphics[width=1\linewidth]{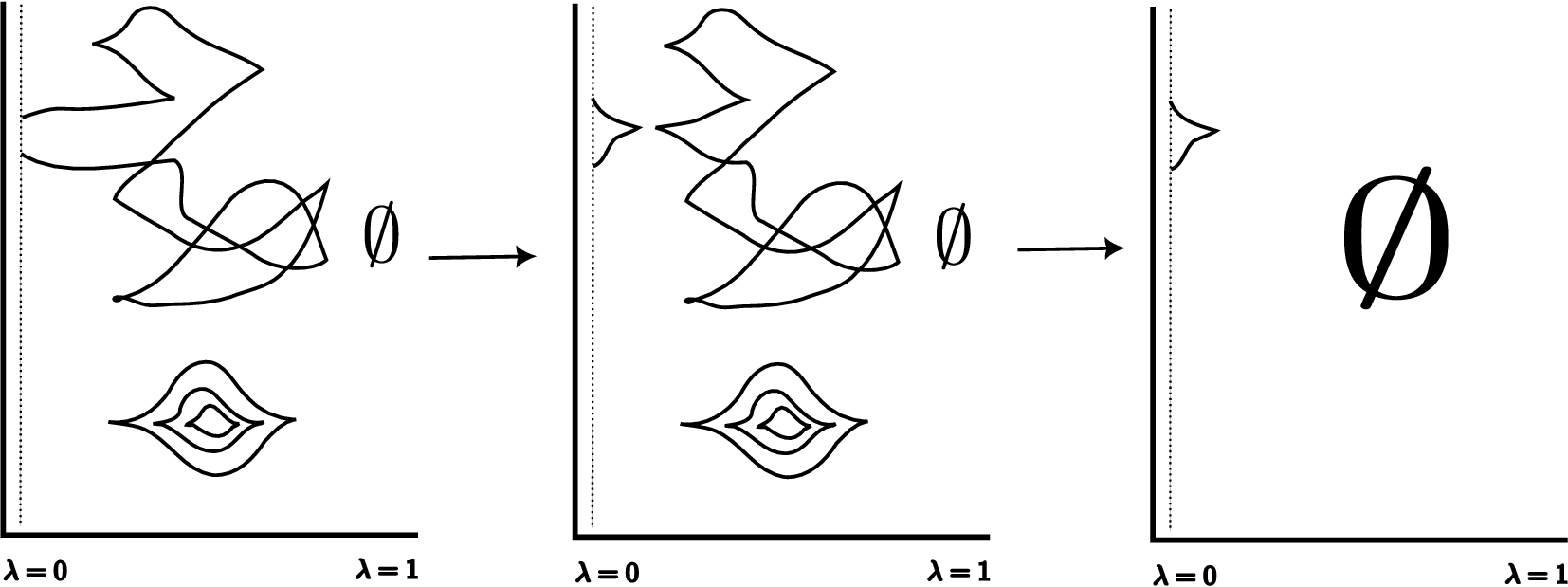}
    \caption{Cerf graphics in the proof of Theorem \ref{saddlenodethm}.}
    \label{check}
\end{figure}
Finally, we can define $F(x,\lambda,\sigma)=\begin{cases}F_1(x,\lambda,3\sigma)&\textrm{for } 0\le \sigma\le\frac{1}{3}\\F_2(x,\lambda,3\sigma-1)&\textrm{for }\frac{1}{3}\le \sigma\le \frac{2}{3}\\F_3(x,\lambda,3\sigma-2)&\textrm{for }\frac{2}{3}\le \sigma\le 1\end{cases}.$

Since taking the gradient of the Whitney coordinates for a cubic death singularity (c.f Equation \ref{whitcoord}) yields a family of vector fields exhibiting a saddle node bifurcation, we see that $B$ isolates a saddle node bifurcation for $\frac{dx}{dt}=F(x,\sigma,1)$. 
\end{proof}

\section{Concluding remarks}

The result of this paper of course begs the question of what happens in lower dimensions. We can prove the same theorem in dimension $n=2$ using the Jordan-Schoenflies theorem in place of Reineck's theorem -- the restriction of what we call being a simple isolating block for $n=2$ guarantees that we are considering disks -- and then following the rest of our proof given above identically (crucially, the pseudo-isotopy theorem holds in dimension $2$). The question of what happens in dimensions $n=3$ and $4$ seems to be out of reach using the methods considered here.

It is of course natural to consider analogous questions about other bifurcation phenomena from the persepective of methods of this paper. While this involves the more delicate questions about the higher homotopy groups of the pseudo-isotopy space $\mathcal{P}(M)$, we believe that at least a partial answer can be given using similar methods for other bifurcation phenomena; we plan to return to this question in a future paper.

Finally, we mention that we believe it is possible to extend the result of this paper to isolating blocks which are not isotopic to products, as much of the machinery of Cerf theory applies to general cobordisms.

\bibliographystyle{amsalpha}
\bibliography{bib}
\end{document}

%% file: correctm.tex
\def\corcommstyle{\bf\small\tt}

\def\corrl #1<<#2||#3>>{
\if\visiblecomments y
  \begin{quote} {\corcommstyle $<<$COMMENT$>>$ {\color{red}#1\marginpar{!!}}\\$<<$OLD$<<$} \end{quote}

{\color{red} 
 #2
 }

  \begin{quote} {\corcommstyle ==NEW== } \end{quote}
   \noindent\hrulefill
 
\vspace{-10pt} 
 
 \noindent\hrulefill
 
 \vspace{-10pt} 
 
 \noindent\dotfill
 
  #3
  
   \noindent\dotfill 

\vspace{-10pt} 
 
 \noindent\hrulefill
 
 \vspace{-10pt} 
 
 \noindent\hrulefill
  \begin{quote} {\corcommstyle $>>$END$>>$ } \end{quote}
 \else
  #3
 \fi
}

\long\def\longcorrl #1<<#2||#3>>{
\if\visiblecomments y
  \begin{quote} {\corcommstyle $<<$COMMENT$>>$ {\color{red}#1\marginpar{!!}}\\$<<$OLD$<<$} \end{quote}
 
 {\color{red}

  #2
  
  }
  
  \begin{quote} {\corcommstyle ==NEW== } \end{quote}
  
    \noindent\hrulefill
 
\vspace{-10pt} 
 
 \noindent\hrulefill
 
 \vspace{-10pt} 
 
 \noindent\dotfill
 
  #3
  
   \noindent\dotfill 

\vspace{-10pt} 
 
 \noindent\hrulefill
 
 \vspace{-10pt} 
 
 \noindent\hrulefill
  \begin{quote} {\corcommstyle $>>$END$>>$ } \end{quote}
 \else
  #3
 \fi
}

\def\mlabel #1
{
  \if\visiblecomments y
     \marginpar[\flushright \bf \footnotesize #1]{\bf \footnotesize #1}
  \fi
}

\def\flabel #1
{
  \if\visiblecomments y
       \hbox{\bf\footnotesize #1}
  \fi
}

\def\corrq #1<<#2>>{
\if\visiblecomments y
  \begin{quote} {\corcommstyle $<<$COMMENT$>>$ {\color{red}#1}\marginpar{!!}\\$<<$BEG$<<$} \end{quote}
  \noindent\hrulefill
 
\vspace{-10pt} 
 
 \noindent\hrulefill
 
 \vspace{-10pt} 
 
 \noindent\dotfill

  #2
 
  \noindent\dotfill 

\vspace{-10pt} 
 
 \noindent\hrulefill
 
 \vspace{-10pt} 
 
 \noindent\hrulefill 
  \begin{quote} {\corcommstyle $>>$END$>>$ } \end{quote}
 \else
  #2
 \fi
}

\long\def\longcorrq #1<<#2>>{
\if\visiblecomments y
  \begin{quote} {\corcommstyle $<<$COMMENT$>>$ #1\marginpar{!!}\\$<<$BEG$<<$} \end{quote}
  \noindent\hrulefill
 
\vspace{-10pt} 
 
 \noindent\hrulefill
 
 \vspace{-10pt} 
 
 \noindent\dotfill

  #2

  \noindent\dotfill 

\vspace{-10pt} 
 
 \noindent\hrulefill
 
 \vspace{-10pt} 
 
 \noindent\hrulefill 
  \begin{quote} {\corcommstyle $>>$END$>>$ } \end{quote}
 \else
  #2
 \fi
}

\def\corrc #1<<>>{
\if\visiblecomments y
  \begin{quote} {\corcommstyle $<<$COMMENT$>>$ \color{red} #1\marginpar{!!}} \end{quote}
\fi
}

\def\corre #1<<#2||#3>>{
\if\visiblecomments y
  #3\marginpar{\corcommstyle #1}
 \else
  #3
 \fi
}

\long\def\longcorre #1<<#2||#3>>{
\if\visiblecomments y
  #3\marginpar{\corcommstyle #1}
 \else
  #3
 \fi
}

\def\corrs #1<<#2||#3>>{
\if\visiblecomments y
  #3\marginpar{\corcommstyle #2 $\rightarrow$ #3\\ #1}
 \else
  #3
 \fi
}

\def\corro #1<<#2||#3>>{
#2}

\def\corrn #1<<#2||#3>>{
#3}

\long\def\longcorro #1<<#2||#3>>{
#2}

\long\def\longcorrn #1<<#2||#3>>{
#3}

\long\def\underconstruction #1<<<#2>>>{
\if\visiblecomments y
  \begin{quote} {\corcommstyle $<<$UNDER CONSTRUCTION - BEGIN$>>$ #1\marginpar{!!}} \end{quote}
  #2
  \begin{quote} {\corcommstyle $>>$UNDER CONSTRUCTION - END$>>$ } \end{quote}
 \else
 \fi
}

\def\showcomments{
  \let\visiblecomments y
}

\def\hidecomments{
  \let\visiblecomments n
}


%% file: bib.bib
@book {chicone,
    AUTHOR = {Chicone, Carmen},
     TITLE = {Ordinary differential equations with applications},
    SERIES = {Texts in Applied Mathematics},
    VOLUME = {34},
   EDITION = {Third},
 PUBLISHER = {Springer, Cham},
      YEAR = {[2024] \copyright 2024},
     PAGES = {xxii+729},
      ISBN = {978-3-031-51651-1; 978-3-031-51652-8},
   MRCLASS = {34-01 (37-01)},
  MRNUMBER = {4769729},
       DOI = {10.1007/978-3-031-51652-8},
       URL = {https://doi.org/10.1007/978-3-031-51652-8},
}

@book {MR511133,
    AUTHOR = {Conley, Charles},
     TITLE = {Isolated invariant sets and the {M}orse index},
    SERIES = {CBMS Regional Conference Series in Mathematics},
    VOLUME = {38},
 PUBLISHER = {American Mathematical Society, Providence, RI},
      YEAR = {1978},
     PAGES = {iii+89},
      ISBN = {0-8218-1688-8},
   MRCLASS = {58Fxx (34-02 35-02)},
  MRNUMBER = {511133},
MRREVIEWER = {Joel\ Smoller},
}

@article {ConleyEaston,
    AUTHOR = {Conley, C. and Easton, R.},
     TITLE = {Isolated invariant sets and isolating blocks},
   JOURNAL = {Trans. Amer. Math. Soc.},
  FJOURNAL = {Transactions of the American Mathematical Society},
    VOLUME = {158},
      YEAR = {1971},
     PAGES = {35--61},
      ISSN = {0002-9947,1088-6850},
   MRCLASS = {57.48},
  MRNUMBER = {279830},
MRREVIEWER = {F.\ W.\ Wilson, Jr.},
       DOI = {10.2307/1995770},
       URL = {https://doi.org/10.2307/1995770},
}

@article { foreman:rudolph:weiss,
    AUTHOR = {Foreman, Matthew and Rudolph, Daniel J. and Weiss, Benjamin},
     TITLE = {The conjugacy problem in ergodic theory},
   JOURNAL = {Ann. of Math. (2)},
  FJOURNAL = {Annals of Mathematics. Second Series},
    VOLUME = {173},
      YEAR = {2011},
    NUMBER = {3},
     PAGES = {1529--1586},
      ISSN = {0003-486X},
   MRCLASS = {37A05 (03E15 28D05 37A35)},
  MRNUMBER = {2800720},
MRREVIEWER = {Mariusz Lema\'{n}czyk},
       DOI = {10.4007/annals.2011.173.3.7},
       URL = {https://doi-org.proxy.libraries.rutgers.edu/10.4007/annals.2011.173.3.7},
}

@article{ brunton:proctor:kutz,
Author = {Brunton, Steven L. and Proctor, Joshua L. and Kutz, J. Nathan},
Title = {Discovering governing equations from data by sparse identification of
   nonlinear dynamical systems},
Journal = {PROCEEDINGS OF THE NATIONAL ACADEMY OF SCIENCES OF THE UNITED STATES OF
   AMERICA},
Year = {2016},
Volume = {113},
Number = {15},
Pages = {3932-3937},
Month = {APR 12},
DOI = {10.1073/pnas.1517384113},
ISSN = {0027-8424},
EISSN = {1091-6490},
ResearcherID-Numbers = {Brunton, Steve/AAG-3871-2019
   Brunton, Steven/AAG-3871-2019},
ORCID-Numbers = {Brunton, Steve/0000-0002-6565-5118
   },
Unique-ID = {WOS:000373762400028},
}

@book { palis:takens:93,
    AUTHOR = {Palis, Jacob and Takens, Floris},
     TITLE = {Hyperbolicity and sensitive chaotic dynamics at homoclinic
              bifurcations},
    SERIES = {Cambridge Studies in Advanced Mathematics},
    VOLUME = {35},
      NOTE = {Fractal dimensions and infinitely many attractors},
 PUBLISHER = {Cambridge University Press},
   ADDRESS = {Cambridge},
      YEAR = {1993},
     PAGES = {x+234},
      ISBN = {0-521-39064-8},
   MRCLASS = {58F14 (58F13 58F15)},
  MRNUMBER = {MR1237641 (94h:58129)},
MRREVIEWER = {Michael Hurley},
}

@article {Salamon1,
    AUTHOR = {Salamon, Dietmar},
     TITLE = {Connected simple systems and the {C}onley index of isolated
              invariant sets},
   JOURNAL = {Trans. Amer. Math. Soc.},
  FJOURNAL = {Transactions of the American Mathematical Society},
    VOLUME = {291},
      YEAR = {1985},
    NUMBER = {1},
     PAGES = {1--41},
      ISSN = {0002-9947,1088-6850},
   MRCLASS = {58F25 (34C35)},
  MRNUMBER = {797044},
       DOI = {10.2307/1999893},
       URL = {https://doi.org/10.2307/1999893},
}

@article {Reineck1,
    AUTHOR = {Reineck, James F.},
     TITLE = {Continuation to the minimal number of critical points in
              gradient flows},
   JOURNAL = {Duke Math. J.},
  FJOURNAL = {Duke Mathematical Journal},
    VOLUME = {68},
      YEAR = {1992},
    NUMBER = {1},
     PAGES = {185--194},
      ISSN = {0012-7094,1547-7398},
   MRCLASS = {58E05 (57R70 58F09 58F25)},
  MRNUMBER = {1185822},
MRREVIEWER = {Christopher\ K.\ McCord},
       DOI = {10.1215/S0012-7094-92-06807-4},
       URL = {https://doi.org/10.1215/S0012-7094-92-06807-4},
}

@article {Reineck2,
    AUTHOR = {Reineck, James F.},
     TITLE = {Continuation to gradient flows},
   JOURNAL = {Duke Math. J.},
  FJOURNAL = {Duke Mathematical Journal},
    VOLUME = {64},
      YEAR = {1991},
    NUMBER = {2},
     PAGES = {261--269},
      ISSN = {0012-7094,1547-7398},
   MRCLASS = {58F09 (58F25)},
  MRNUMBER = {1136376},
MRREVIEWER = {Louis\ Funar},
       DOI = {10.1215/S0012-7094-91-06413-6},
       URL = {https://doi.org/10.1215/S0012-7094-91-06413-6},
}

@article {Cerf1,
    AUTHOR = {Cerf, Jean},
     TITLE = {La stratification naturelle des espaces de fonctions
              diff\'erentiables r\'eelles et le th\'eor\`eme de la
              pseudo-isotopie},
   JOURNAL = {Inst. Hautes \'Etudes Sci. Publ. Math.},
  FJOURNAL = {Institut des Hautes \'Etudes Scientifiques. Publications
              Math\'ematiques},
    NUMBER = {39},
      YEAR = {1970},
     PAGES = {5--173},
      ISSN = {0073-8301,1618-1913},
   MRCLASS = {58D99 (57D35)},
  MRNUMBER = {292089},
MRREVIEWER = {John\ Willard\ Milnor},
       URL = {http://www.numdam.org/item?id=PMIHES_1970__39__5_0},
}

@incollection {MR98418,
    AUTHOR = {Whitney, Hassler},
     TITLE = {Singularities of mappings of {E}uclidean spaces},
 BOOKTITLE = {Symposium internacional de topolog\'ia algebraica
              {I}nternational symposium on algebraic topology},
     PAGES = {285--301},
 PUBLISHER = {Universidad Nacional Aut\'onoma de M\'exico and UNESCO,
              M\'exico},
      YEAR = {1958},
   MRCLASS = {53.00 (26.00)},
  MRNUMBER = {98418},
MRREVIEWER = {R.\ Thom},
}
